\newtheorem{theorem}{Theorem}[section]
\theoremstyle{plain}
\newtheorem{corollary}[theorem]{Corollary}
\newtheorem{defi}[theorem]{Definition}
\newtheorem{lemma}[theorem]{Lemma}
\newtheorem{notation}[theorem]{Notation}
\newtheorem{prop}[theorem]{Proposition}
\numberwithin{equation}{section}
\def\nuhat{\widehat{\nu}}
\def\muhat{\widehat{\mu}}
\def\Hk{{\mathcal H}}
\def\wtil{\widetilde}
\def\half{\frac{1}{2}}
\newcommand{\lam}{\lambda}
\newcommand{\gam}{\gamma}
\newcommand{\Gam}{\Gk}
\def\b0{{\bf 0}}
\newcommand{\R}{{\mathbb R}}
\newcommand{\Z}{{\mathbb Z}}
\def\N{{\mathbb N}}
\newcommand{\E}{{\mathbb E}\,}
\newcommand{\Prob}{{\mathbb P}\,}
\def\P{\Prob}
\def\bp{{\bf p}}
\def\wh{\widehat}
\def\Dk{{\mathcal D}}
\def\be{\begin{equation}}
\def\ee{\end{equation}}
\newcommand{\Ek}{{\mathcal E}}
\newcommand{\Fk}{{\mathcal F}}
\def\Gk{{\mathcal G}}
\newcommand{\eps}{{\varepsilon}}
\newcommand{\const}{{\rm const}}
\def\ve1{\vec{1}}
\def\Ak{{\mathcal A}}
\def\lb{\boldsymbol{\lam}}
\def\gb{\boldsymbol{\gam}}
\def\pb{\boldsymbol{p}}
\def\ab{\boldsymbol{a}}
\def\nb{\boldsymbol{n}}
\def\eb{\boldsymbol{e}}
\def\vb{\boldsymbol{v}}
\def\Bb{\boldsymbol{b}}
\def\qb{\boldsymbol{q}}
\begin{document}

\title[Fourier decay for self-similar measures]
{Fourier decay for self-similar measures}

\author[B.\ Solomyak]{BORIS SOLOMYAK}

\address{Boris Solomyak, Department of Mathematics, Bar-Ilan University, Ramat Gan, Israel}
\email{bsolom3@gmail.com}

\thanks{Supported in part by the Israel Science Foundation grant 396/15.}

\date{\today}

\begin{abstract}
We prove that, after removing a zero Hausdorff dimension exceptional set of parameters, all self-similar measures on the line have a power decay of the Fourier transform at infinity. 
In the homogeneous case, when all contraction ratios are equal, this is essentially due to Erd\H{o}s and Kahane. In the non-homogeneous case the difficulty we have to overcome is the apparent lack of convolution structure.
\end{abstract}

\maketitle

\section{Introduction}

\thispagestyle{empty}

For a finite positive Borel measure $\mu$ on $\R$, consider the Fourier transform
$$
\wh\mu(t) = \int_\R e^{itx}\,d\mu(x).
$$
The behavior of the Fourier transform at infinity is an important issue in many areas of mathematics. The measure $\mu$ is called a {\em Rajchman measure} if 
$\lim_{|t|\to \infty} \wh\mu(t)=0$. 
Riemann-Lebesgue Lemma says that absolutely continuous measures are Rajchman, but which singular measures are Rajchman is a subtle question with a long history, see \cite{Lyons}.
For many purposes simple convergence of $\muhat(t)$ to zero is not enough, and some quantitative decay is needed. 

\begin{defi}{\em
For $\alpha>0$ let
$$
\Dk(\alpha) = \bigl\{\nu\ \mbox{finite positive measure on $\R$}:\ \left|\widehat{\nu}(t)\right| = O(|t|^{-\alpha}),\ \ |t|\to \infty\bigr\},
$$
and denote $\Dk = \bigcup_{\alpha>0} \Dk(\alpha)$. A measure $\nu$ is said to have {\em power Fourier decay} if $\nu\in \Dk$.}
\end{defi}

This property has a number of applications: for instance, if $\mu$ has power Fourier decay, then $\mu$-almost every number is normal to any base, see \cite{DEL,QR}, and the support of $\mu$ has positive Fourier dimension, see \cite{Mattila}.

\medskip

In this paper we focus on the most basic class of ``fractal measures,'' namely, self-similar measures on the line. 

\begin{defi}{\em
Let $m\ge 2$, $\lb=(\lam_1,\ldots,\lam_m)\in (0,1)^m$, $\Bb=(b_1,\ldots,b_m)\in \R^m$, and let $\qb=(q_1,\ldots,q_m)$ be a probability vector. The Borel probability measure $\mu = \mu_{\lb,\Bb}^{\qb}$ on $\R$ satisfying 
\be \label{eq-ssm}
\mu = \sum_{j=1}^m q_j (\mu\circ f_j^{-1}),\ \ \ \mbox{where}\ f_j(x) = \lam_j x + b_j,
\ee
is called {\em self-similar}, or {\em invariant}, for the iterated function system (IFS) $\{f_j\}_{j=1}^m$, with the probability vector $\qb$. It is well-known  that there exists a unique such measure 
\cite{Hutch}.
We assume that the fixed points ${\rm Fix}(f_j) = \frac{b_j}{1-\lam_j}$ are not all equal (otherwise, the measure $\mu$ is a point mass) and call the corresponding pairs $(\lb,\Bb)$ {\em non-trivial.} We write $\qb>0$ if $q_j>0$ for all $j$.}
\end{defi}

\begin{theorem} \label{th-main}
For $m\ge 2$, there exists a set $\Ek$ of zero Hausdorff dimension in $(0,1)^m$ such that for all $\lb \in (0,1)^m\setminus \Ek$, for all non-trivial $(\lb,\Bb)$ and for all  $\qb>0$ we have $ \mu_{\lb,\Bb}^{\qb}\in \Dk$.
\end{theorem}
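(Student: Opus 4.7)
My plan is to extend the classical Erdős–Kahane argument for the homogeneous case to the non-homogeneous setting. The starting point is the iterated self-similarity relation: from $\wh\mu(t) = \sum_j q_j e^{itb_j}\wh\mu(\lam_j t)$ one obtains, for every $n\ge 1$,
$$
\wh\mu(t) = \sum_{\bi\in\{1,\ldots,m\}^n} q_{\bi}\, e^{it b_{\bi}}\, \wh\mu(\lam_{\bi}\, t),
$$
with $\lam_{\bi}=\lam_{i_1}\cdots\lam_{i_n}$, $q_{\bi}=q_{i_1}\cdots q_{i_n}$, and $b_{\bi}=\sum_{k=1}^n \lam_{i_1}\cdots \lam_{i_{k-1}} b_{i_k}$. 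In the homogeneous case $\lam_j\equiv\lam$ this collapses into the infinite product $\wh\mu(t)=\prod_{k\ge 0}P(\lam^k t)$ with $P(s)=\sum_j q_j e^{is b_j}$, whose power decay for $\lam$ outside a zero-dimensional Diophantine set is the Erdős–Kahane theorem.

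In the non-homogeneous case the product structure is lost and one must work with the sum directly. I would choose $n\asymp \log|t|$ calibrated by the Lyapunov exponent $\chi=-\sum_j q_j\log\lam_j$, so that for ``typical'' words $\bi$ one has $\lam_{\bi}|t|$ of order one; a large-deviation estimate kills the contribution from words with $\log\lam_{\bi}$ far from $-n\chi$. For the main contribution, I would apply Cauchy--Schwarz (or a higher moment) to reduce to a double sum in which the bound $|\wh\mu(\lam_{\bi}t)|\le 1$ is trivial and the only oscillating factor is $e^{it(b_{\bi}-b_{\bj})}$. The role played by the single parameter $\lam$ in Erdős–Kahane is now played by the vector $\lb$, and the genericity one needs is that the phases $tb_{\bi}/(2\pi)$ equidistribute modulo $1$ across words.

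To show $\Dh\mathcal{E}=0$, I would follow the Erdős–Kahane template: for each $\alpha>0$, let $\mathcal{E}_\alpha$ be the set of $\lb$ for which no bound $|\wh\mu(t)|\le C|t|^{-\alpha}$ holds, and cover $\mathcal{E}_\alpha$ by the $\lb$ at which the cancellation above fails at some dyadic scale of $|t|$. These near-resonance sets are low-dimensional, and summing over scales would yield $\Dh\mathcal{E}_\alpha=0$; then $\mathcal{E}=\bigcup_{k\ge 1}\mathcal{E}_{1/k}$ is still of zero dimension. The main obstacle is the one advertised in the abstract: in the homogeneous case each factor $P(\lam^k t)$ is analyzed independently, whereas here the numbers $\lam_{\bi}$ for different words are strongly coupled, so many simultaneous Diophantine conditions on $\lb$ must be imposed and shown to jointly cut out a zero-dimensional exceptional set.
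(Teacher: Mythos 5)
Your overall template (iterate the self-similarity relation, extract cancellation for generic $\lb$, cover the exceptional set Erd\H{o}s--Kahane style scale by scale) matches the paper's philosophy, but the central cancellation mechanism you propose is different from the paper's and, as stated, has a genuine gap.

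The problem is the Cauchy--Schwarz/second-moment step. Writing
$|\wh\mu(t)|^2 = \sum_{\bi,\bj} q_{\bi}q_{\bj}\, e^{it(b_{\bi}-b_{\bj})}\, \wh\mu(\lam_{\bi}t)\,\overline{\wh\mu(\lam_{\bj}t)}$,
you cannot ``bound $|\wh\mu(\lam_{\bi}t)|\le 1$ trivially'' and retain the oscillating factor: discarding the $\wh\mu$ factors termwise destroys the cancellation, and keeping them means the phases do not separate. Even if one could isolate $\sum_{\bi,\bj} q_{\bi}q_{\bj}e^{it(b_{\bi}-b_{\bj})} = \bigl|\sum_{\bi}q_{\bi}e^{itb_{\bi}}\bigr|^2$, this is the squared Fourier transform of the $n$-th discretization of $\mu$, so you would be facing essentially the original problem again; the required equidistribution of $tb_{\bi}$ for \emph{all} large $t$ and generic $\lb$ is exactly what is not available. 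The paper avoids this entirely by a structural trick you are missing: pass to the $\ell$-th iterate of the IFS, so that the number of maps $m^{\ell}$ vastly exceeds the number $d$ of distinct contraction ratios, and in particular two maps share the \emph{same} ratio $\gam_1=\lam_2\lam_1^{\ell-1}$ with translations differing by exactly $\pi$ (after normalizing $b_1,b_2$). Then already in the one-step relation the coefficient of $\wh\mu(\gam_1 t)$ has modulus at most $p_1\bigl(1-\frac{\pi\eps}{2}\|t\|^2\bigr)$, where $\|t\|$ is the distance to the nearest integer. Iterating, each word $w\in\{1,\dots,d\}^N$ becomes a lattice path in $\Z_+^d$, and one gains a fixed factor on every ``good edge'' where $\|\gb^{\nb}t\|\ge\rho$; a submartingale/Azuma--Hoeffding argument shows that for non-exceptional $\gb$ all but an exponentially small $\Prob$-measure of words carry a positive proportion of good edges. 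The covering of the exceptional set is then a genuinely multi-parameter Erd\H{o}s--Kahane count: along a path with few good vertices, the nearest integers $K_{\nb}$ to $\gb^{\nb}t$ are determined (up to $A$ choices at the few bad steps) by backward recursion on the lattice, yielding few balls of radius $\sim B_1^{-N}$ and Hausdorff dimension at most $s$. Your sketch of the covering step (``near-resonance sets are low-dimensional'') asserts the conclusion without this mechanism, which is where the real work lies. To repair your argument you would need to replace the Cauchy--Schwarz step with a device that produces a per-step multiplicative gain inside a single term of the sum; the coincidence of contraction ratios obtained by iterating the IFS is what makes that possible.
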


The theorem is an immediate consequence of the following:

\begin{theorem} \label{th-main1}
Fix $m\ge 2$, $1 < C_1 < C_2<\infty$ and $\epsilon,s>0$. Then there exist $\alpha>0$ and $\wtil{\Ek}\subset (0,1)^m$, depending on these parameters, such that $\dim_H(\wtil{\Ek})\le s$ and for all $\lb\in (0,1)^m\setminus
\wtil{\Ek}$, with
\be \label{cond0}
0 <C_2^{-1} \le \min_j \lam_j \le \max_j \lam_j \le C_1^{-1} < 1,
\ee
$(\lb,\Bb)$ non-trivial, and $\qb$ satisfying $\min_j q_j \ge \epsilon$, we have ${\mu}_{\lb,\Bb}^{\qb}\in \Dk(\alpha)$.
\end{theorem}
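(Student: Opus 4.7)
My plan is to adapt the Erd\H{o}s--Kahane argument from the homogeneous case (where $\hat\mu$ has the product factorisation $\prod_k\hat\nu(\lam^k t)$ with $\nu=\sum_jq_j\delta_{b_j}$, to which Diophantine analysis of the orbit $\{\lam^k t\}$ directly applies) to the general case, where no such product exists. The starting point is the Fourier-side functional equation
\[
\hat\mu(t) = \sum_{j=1}^m q_j e^{itb_j}\hat\mu(\lam_j t),
\]
whose $n$-fold iteration gives
\[
\hat\mu(t) = \sum_{\bi\in\{1,\ldots,m\}^n} q_\bi\, e^{itb_\bi}\, \hat\mu(\lam_\bi t),
\]
with $\lam_\bi=\prod_k\lam_{i_k}$, $q_\bi=\prod_k q_{i_k}$, $b_\bi=\sum_k\lam_{i_1}\cdots\lam_{i_{k-1}}b_{i_k}$.

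The substitute I would use is a quantitative Cauchy--Schwarz bound of the form
\[
|\hat\mu(t)|^2 = \Bigl|\sum_j q_j e^{itb_j}\hat\mu(\lam_j t)\Bigr|^2 \le \bigl(1-\delta(t,\lb)\bigr)\sum_j q_j|\hat\mu(\lam_j t)|^2,
\]
where $\delta(t,\lb)\in[0,1]$ measures the spread of the complex numbers $\{e^{itb_j}\hat\mu(\lam_j t)\}_j$ (and $\delta=0$ only when they are all aligned). Iterating yields
\[
|\hat\mu(t)|^2 \le \sum_{\bi\in\{1,\ldots,m\}^n} q_\bi\, G_\bi(t,\lb)\,|\hat\mu(\lam_\bi t)|^2, \qquad G_\bi(t,\lb)=\prod_{k=0}^{n-1}\bigl(1-\delta(\lam_{\bi|_k}t,\lb)\bigr).
\]
Choosing $n\asymp \log|t|$ so that $\lam_\bi t=O(1)$ uniformly (which is feasible thanks to \eqref{cond0}), the problem reduces to showing $G_\bi(t,\lb)\lesssim |t|^{-2\alpha}$ on all but a negligible $q_\bi$-weighted portion of words $\bi$, for $\lb$ outside an exceptional set $\wtil\Ek$ of Hausdorff dimension $\le s$.

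The Erd\H{o}s--Kahane input is a Diophantine lower bound on $\delta$: for $\omega$ outside a ``resonance'' set $R_\lb\subset\R$, one has $\delta(\omega,\lb)\ge\delta_0>0$, and the resonance condition (the phases $\omega b_j\bmod 2\pi$ clustering in a short arc, suitably weighted by $q_j\ge\eps$) carves out, for each dyadic scale of $\omega$, a tube in $\lb$-space of polynomially small Lebesgue measure. A tree-counting argument \`a la Erd\H{o}s--Kahane then covers the set of $\lb$ for which more than $\eta n$ of the scales $\lam_{\bi|_k}t$ lie in $R_\lb$ by a union of $\binom{n}{\eta n}$ such tubes; summing over $|t|$ along a dyadic sequence and using continuity of $\hat\mu$ in $t$, an optimisation over $\delta_0,\eta$ and the tube-exponent forces $\dim_H\wtil\Ek\le s$ and produces power decay at a rate $\alpha=\alpha(C_1,C_2,\eps,s)>0$.

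The principal obstacle is the history-dependence of the orbit: along a random word $\bi$ the frequencies $\{\lam_{\bi|_k}t\}_k$ depend on $\bi$, so the ``resonance at level $k$'' events are not independent events on $\lb$ alone. In the homogeneous case the orbit $\{\lam^k t\}$ is deterministic and Erd\H{o}s--Kahane applies verbatim. To decouple in our setting I would first show that membership in $R_\lb$ depends on the frequency only through its approximate dyadic magnitude (up to a bounded factor inherited from $C_2/C_1$ via \eqref{cond0}), so the level-$k$ event becomes an arithmetic condition on $\lb$ at a single scale, amenable to Diophantine counting; a further bookkeeping step then aggregates the exceptional sets over $|t|\in 2^\N$ into a single $\wtil\Ek\subset(0,1)^m$ of Hausdorff dimension at most $s$. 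A secondary difficulty is maintaining uniformity of $\delta_0$ over $\bi$: here the uniform bounds $\min_jq_j\ge\eps$ and \eqref{cond0} are essential, since they guarantee that the sub-IFS acting at level $k$ is of bounded complexity.
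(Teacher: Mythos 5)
Your plan breaks down at its very first step, and the failure is precisely the ``apparent lack of convolution structure'' that makes the non-homogeneous case hard. The Cauchy--Schwarz defect $\delta(t,\lb)$ measures the spread of the complex numbers $z_j=e^{itb_j}\hat\mu(\lam_j t)$, and you propose to bound it below by a Diophantine (``non-resonance'') condition on the phases $tb_j \bmod 2\pi$. But the $z_j$ involve $\hat\mu$ evaluated at \emph{distinct} frequencies $\lam_j t$, and the phases of $\hat\mu(\lam_j t)$ are unknown and completely uncontrolled; they can realign the $z_j$ even when the $e^{itb_j}$ are well separated on the circle (and conversely). So no condition on $(\lb,\Bb,t)$ alone yields $\delta\ge\delta_0$, and the key lemma your argument rests on is not available. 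In the homogeneous case this problem disappears because all the factors $\hat\mu(\lam t)$ coincide and pull out of the sum --- which is exactly why Erd\H{o}s--Kahane applies there and not here.

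The paper's resolution is structural rather than analytic: pass to the $\ell$-th iterate of the IFS. The iterated system has $m^\ell$ maps but only polynomially many distinct contraction ratios, so one can find two maps sharing the \emph{same} ratio $\gam_1=\lam_2\lam_1^{\ell-1}$ (namely $f_1^{\ell-1}f_2$ and $f_2f_1^{\ell-1}$), with translation difference normalized to $\pi$ by the free choice of $b_2$. In the one-step Fourier identity these two terms carry the identical factor $\hat\mu(\gam_1 t)$, so the elementary estimate $|p_1^{(1)}+p_2^{(1)}e^{i\pi z}|\le (p_1^{(1)}+p_2^{(1)})(1-\tfrac{\pi\eps}{2}z^2)$ gives an honest contraction depending only on $\|t\|$, independent of the phase of $\hat\mu$; all other terms are handled by the triangle inequality. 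Your second concern --- history-dependence of the orbit $\lam_{\bi|_k}t$ along a random word --- is real but is resolved in the paper not by a dyadic-magnitude reduction but by observing that the frequency depends only on the abelianization $\ell(w[1,i])\in\Z_+^d$ of the prefix; the gain is then extracted by a martingale/Azuma--Hoeffding argument over the word space, and the exceptional set is covered by a multi-parameter Erd\H{o}s--Kahane count of the integer configurations $K_{\vb}$ along lattice paths. Without the iterate trick (or some other device producing genuine cancellation between terms with identical $\hat\mu$-factors), the rest of your outline has nothing to iterate.
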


We do not attempt to give specific quantitative estimates of the decay rate, although in principle, this is possible. Our proof gives extremely slow power decay.

\medskip

Theorem~\ref{th-main} should be compared with a recent result of Li and Sahlsten, which was an inspiration for us.

\begin{theorem}[\cite{LS1}] \label{th-LS}
Let ${\mu}_{\lb,\Bb}^{\qb}$ be a self-similar measure with non-trivial $(\lb,\Bb)$ and  $\qb>0$.

{\rm (i)} If there exist $i\ne j$ such that $\log\lam_i/\log\lam_j$ is irrational, then $\widehat{\mu}_{\lb,\Bb}^{\qb}(t)\to 0$ as $|t|\to \infty$.

{\rm (ii)} If $\log\lam_i/\log\lam_j$ is Diophantine for some $i\ne j$, then there exists $\alpha>0$ such that
$$
\left|\widehat{\mu}_{\lb,\Bb}^{\qb}(t)\right| = O\left(|\log|t||^{-\alpha}\right),\ \ |t|\to \infty.
$$
\end{theorem}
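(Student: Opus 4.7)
The plan is to recast both parts as renewal problems on a random walk driven by the log-contractions $-\log\lam_j$. Realising $\mu$ as the law of $\pi(\omega) = \sum_{n\ge 1}\lam_{\omega_1}\cdots\lam_{\omega_{n-1}}b_{\omega_n}$ with $\omega$ sampled from $\qb^{\N}$, iteration of self-similarity gives
$$
\widehat\mu(t) = \sum_{|\omega|=n} q_\omega\, e^{it\beta_n(\omega)}\,\widehat\mu(\lam_\omega t),\qquad \beta_n(\omega) := \sum_{k=1}^n \lam_{\omega_1}\cdots\lam_{\omega_{k-1}}\, b_{\omega_k},
$$
with $q_\omega = q_{\omega_1}\cdots q_{\omega_n}$ and $\lam_\omega = \lam_{\omega_1}\cdots\lam_{\omega_n}$. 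Introduce the i.i.d.\ random walk $S_n = \sum_{k=1}^n \log(1/\lam_{\omega_k})$ of positive mean and, for $T=\log|t|$, the first-passage time $\tau = \min\{n:S_n > T\}$. Optional stopping rewrites the identity as
$$
\widehat\mu(t) = \mathbb{E}\!\left[\,e^{it\beta_\tau(\omega)}\,\widehat\mu\!\left(\mathrm{sgn}(t)\,e^{T-S_\tau}\right)\right],
$$
whose inner argument lies in $[\min_j\lam_j, 1]$. Decay of $\widehat\mu(t)$ is thus reduced to oscillation of the phase $e^{it\beta_\tau(\omega)}$ under the renewal law of $(S_n)$.

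For part~(i), the condition that $\log\lam_i/\log\lam_j$ is irrational for some $i\neq j$ is exactly non-arithmeticity of the step distribution of $S_n$. By Blackwell's renewal theorem in the joint form that also tracks the last $K$ increments, the law of $(S_\tau - T,\,\omega_{\tau-K+1},\ldots,\omega_\tau)$ converges as $T\to\infty$ to a limit whose overshoot marginal has a piecewise-continuous density on $[0,\max_j\log(1/\lam_j)]$. Using the cocycle identity, split $\beta_\tau(\omega) = \beta_{\tau-K}(\omega) + \lam_{\omega_1}\cdots\lam_{\omega_{\tau-K}}\cdot R_K$ where $R_K$ is a bounded recent tail; since $t\,\beta_{\tau-K}$ inherits a limiting absolutely-continuous distribution, Riemann--Lebesgue annihilates the corresponding phase in the limit, while the tail factor is absorbed by letting $K\to\infty$ slowly with $t$.

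For part~(ii), I quantify the above via the family of twisted transfer operators on H\"older functions $f\colon\{1,\ldots,m\}^{\N}\to\C$,
$$
(\Lk_s f)(\omega) = \sum_{j=1}^m q_j\, \lam_j^{is}\, f(j\omega),\qquad s\in\R.
$$
The Diophantine assumption on $\log\lam_i/\log\lam_j$ forbids near-resonances $q_j\lam_j^{is}\approx q_k\lam_k^{is}$ too sharply in $|s|$, preventing the subdominant spectrum of $\Lk_s$ from approaching the unit circle faster than polynomially: $\|\Lk_s^n\|\le C(1+|s|)^A\rho^n$ with $\rho<1$, uniformly for $|s|$ in a suitable window. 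Fourier-inverting the associated resolvent produces an effective Blackwell theorem of the form $|\mathbb{E}[\psi(S_\tau - T)]-\text{stationary}|\le C\|\psi\|\,T^{-\eta}$; substituting back into the stopped identity gives a single-step gain $T^{-\eta}$, and iterating $\sim \log T$ times compounds to $|\widehat\mu(t)| = O((\log|t|)^{-\alpha})$.

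The principal obstacle is establishing the uniform spectral gap for $\Lk_s$ at large $|s|$. In the homogeneous case $\lam_1=\cdots=\lam_m$ the operators $\Lk_s$ are scalar multiples of a single operator and the whole analysis collapses to the infinite product $\prod_n\bigl(\sum_j q_j e^{it\lam^n b_j}\bigr)$, which is Erd\H{o}s--Kahane territory. In the genuinely non-homogeneous setting this scalar simplification fails, and the Diophantine hypothesis on ratios of the $\log\lam_j$ is precisely what is needed to recover enough control on $\Lk_s$; without it, only the qualitative Rajchman conclusion of part~(i) survives.
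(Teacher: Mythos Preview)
This theorem is not proved in the present paper at all: it is quoted verbatim as a result of Li and Sahlsten \cite{LS1}, and the paper explicitly contrasts its own Erd\H{o}s--Kahane approach with the renewal-theoretic method of \cite{LS1}. There is therefore no proof in the paper to compare your proposal against.

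That said, your sketch is recognisably aimed at the Li--Sahlsten strategy, and the paper does confirm that their argument is ``based on renewal theory.'' A few remarks on the sketch itself. The stopping-time rewriting of $\widehat\mu(t)$ and the identification of the irrationality hypothesis with non-arithmeticity of the step law are correct and are indeed the starting point in \cite{LS1}. However, your treatment of part~(i) is too loose to count as a proof: the claim that ``$t\,\beta_{\tau-K}$ inherits a limiting absolutely-continuous distribution'' does not follow from Blackwell's theorem as stated, since the overshoot law controls $S_\tau - T$, not the accumulated phase $t\beta_{\tau-K}$, and the factor $t$ is blowing up. One needs a more careful decoupling of the phase into a part governed by the renewal limit and a remainder, together with a genuine cancellation mechanism (in \cite{LS1} this is handled via the structure of the self-similar measure and not by a bare Riemann--Lebesgue argument). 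For part~(ii), the transfer-operator framework is the right one, but the crucial step---the polynomial-in-$|s|$ bound $\|\Lk_s^n\|\le C(1+|s|)^A\rho^n$---is asserted rather than derived; this is exactly where the Diophantine hypothesis enters and is the technical heart of \cite{LS1}. Finally, the last sentence (``iterating $\sim\log T$ times'') is not how the logarithmic decay arises: the effective renewal estimate already delivers a power of $T^{-1}=(\log|t|)^{-1}$ in a single pass, and no compounding is needed.
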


The methods are quite different: \cite{LS1} uses an approach based on renewal theory, whereas we develop a multi-parameter generalization of the so-called ``Erd\H{o}s-Kahane argument''.

\subsection{Background}
The best-known case is {\em homogeneous}, when all the contraction ratios are equal: $\lam_j = \lam,\ j\le m$. 
There is a vast literature devoted to it, so we will be brief. An important class of examples is the family of Bernoulli convolutions $\nu_\lam$, which is defined as the invariant measure for the IFS $\{\lam x -1, \lam x+1\}$, with $\lam\in (0,1)$ and probabilities $\{\half,\half\}$. 
One of the original motivations for studying $\nuhat_\lam$ was the problem: {\em for which $\lam\in (\half,1)$ is $\nu_\lam$ singular/absolutely continuous?} (it follows from the ``Law of Pure Type" that $\nu_\lam$ cannot be of mixed type \cite{JW}). Erd\H{o}s \cite{Erd1} proved that $\nuhat_\lam(t)\not\to 0$ as $t\to \infty$ when $\theta=1/\lam$ is a {\em Pisot number}, 
hence the corresponding $\nu_\lam$ is singular.
Recall that a Pisot number is an algebraic integer greater than one whose algebraic (Galois) conjugates are all less than one in modulus. Later Salem \cite{Salem} showed that if
$1/\lam$ is not a Pisot number, then  $\nuhat_\lam(t)\to 0$ as $t\to \infty$, thus providing a characterization of Rajchman Bernoulli convolution measures. In spite of the recent breakthrough results, 
see \cite{Hochman14,Shmerkin14,Shmerkin19,Varju1,Varju2}, the original problem of absolute continuity/singularity for $\mu_\lam$ is still open.

The first non-trivial result on absolute continuity of $\nu_\lam$ was obtained by Erd\H{o}s \cite{Erd2} in 1940. In fact, he proved that for any $[a,b]\subset (0,1)$ there exists $\alpha>0$ such that
$\nu_\lam\in \Dk(\alpha)$ for a.e.\ $\lam\in [a,b]$. Using this and the convolution structure of $\nu_\lam$, he deduced that $\nu_\lam$ is absolutely continuous for a.e.\ $\lam$ sufficiently close to 1. Later, Kahane \cite{kahane} realized that Erd\H{o}s' argument actually gives that $\nu_\lam\in \Dk$ for all $\lam\in (0,1)$ outside a set of zero Hausdorff dimension. (We should mention that  only very few specific $\lam$ are known, for which $\nu_\lam$ has power Fourier decay, found by 
Dai, Feng, and Wang \cite{DFW}.) 
The  Erd\H{o}s-Kahane result plays an important role in the proof of absolute continuity for all $\lam\in (\half,1)$ outside of a zero Hausdorff dimension set by Shmerkin
\cite{Shmerkin14,Shmerkin19}.
 The general homogeneous case is treated analogously to Bernoulli convolutions: the self-similar measure is still an infinite convolution and most of the arguments go through with minor modifications, see \cite{DFW,ShmerkinSolomyak16a}.
 An exposition of the ``Erd\H{o}s-Kahane argument'' with quantitative estimates was given in \cite{PSS00}, and then extended and generalized. Its variants were used in a number of recent papers in fractal geometry and dynamical systems, among them \cite{ShmerkinSolomyak16a,ShmerkinSolomyak16b,FalconerJin14,SSS,KaOrp,BuSo14,BuSo18}.

In the {\em non-homogeneous} not all contraction ratios are the same and the self-similar measure is not a convolution, which makes its study more difficult. First results on absolute continuity were obtained by Neunh\"auserer \cite{Neun_2001} and  Ngai and Wang \cite{NgaiWang05}. In \cite{SSS}, joint with Saglietti and Shmerkin, we proved that, given a  probability vector $\qb>0$ and vector of translations $\Bb\in \R^m$, with all components distinct, for a.e.\ $\lb\in (0,1)^m$ in the ``natural'' parameter region (which depends on $\qb$), the self-similar measure $\mu_{\lb,\Bb}^{\qb}$ is absolutely continuous. 
The proof  was based on a decomposition of the self-similar measure into an integral of measures having a convolution structure, that are only statistically self-similar.
A variant of the Erd\H{o}s-Kahane argument was used  to establish power Fourier decay for the latter (for all but a zero-dimensional set of parameters), but this was not sufficient to deduce any Fourier decay for the original self-similar measure. The methods of \cite{SSS} were pushed further by K\"aenmaki and Orponen \cite{KaOrp}, but again, no conclusion was made for the Fourier decay of non-homogeneous self-similar measures.

We note (thanks to Pablo Shmerkin for bringing this to my attention) that a measure may have power decay outside of a sparse set of frequencies, even if it is not a Rajchman measure. In fact, Kaufman \cite{Kaufman} (in the homogeneous case) and Tsujii \cite{Tsujii} (in the non-homogeneous case) proved that for any non-trivial self-similar measure $\mu$ on the real line, for any $\eps>0$ there exists $\delta>0$ such that the set
$$
\{t\in [-T,T]:\ |\widehat{\mu}(t)| \ge T^{-\delta}\}
$$
can be covered by $T^\eps$ intervals of length $1$. Mosquera and Shmerkin \cite{MS} made the dependence of $\delta$ on $\eps$ quantitative in the homogeneous case. The papers \cite{Kaufman,MS} use a version of the Erd\H{o}s-Kahane argument, whereas the proof in \cite{Tsujii} is based on large deviation estimates.

The study of Fourier decay for other classes of dynamically defined measures has been quite active recently. 
We only mention a few papers, without an attempt to be comprehensive.
Jordan and Sahlsten \cite{JS} obtained power Fourier decay for Gibbs measures for the Gauss map, using methods from dynamics and number theory. Bourgain and Dyatlov \cite{BD} established Fourier decay for Patterson-Sullivan measures associated to a convex co-compact Fuchsian group, using methods from additive combinatorics; see also \cite{SaSte,LNP}. Li \cite{Li2} proved that the stationary measure for a random walk on $SL_2(\R)$ has power decay, when the support of the driving measure generates a Zariski dense subgroup, following his earlier work \cite{Li} showing that such a measure is Rajchman. He initiated the approach based on renewal theory, which was later used by Li and Sahlsten \cite{LS1} to prove Theorem~\ref{th-LS}. Recently the same authors extended their result to a class of self-affine measures in $\R^d$ in \cite{LS2}.

The rest of the paper is devoted to the proof of Theorem~\ref{th-main1}. As already mentioned, it is based on a generalization of the Erd\H{o}s-Kahane argument, but there are many new features, mainly because we have to deal with multi-parameter families.


\section{Reduction}

In view of $(\lb,\Bb)$ being non-trivial, by a linear change of variable we can fix two translation parameters, for instance, $b_1=0$ and $b_2>0$ arbitrary (it can even depend on the other parameters; this would only change the scale on the $t$-axis, but would not affect the  rate of decay of the Fourier transform). After that, we will pass to a higher iterate $\ell$ of the IFS, which preserves the invariant measure.  The reason for doing this is to obtain an IFS with many maps having the same contraction ratio (in fact, the number of maps $m^\ell$, grows exponentially with $\ell$, whereas the number of distinct contraction ratios grows polynomially). 
In this sense, the proof resembles the strategy of the proof in \cite{SSS}, although in other aspects it is very different. 
We now formulate the main technical result.

\begin{theorem} \label{th-tech} Let $d\ge 2$, and consider the IFS
\be \label{ifs-tech}
\Fk_{\gb,\ab}^{\pb} = \{\gam_j x + a_k^{(j)}:\ j=1,\ldots,d;\ 1 \le k \le k_j\},\ \ \pb = (p_k^{(j)}:\ j\le d;\ 1\le k \le k_j),
\ee
where $\gb = (\gam_1,\ldots,\gam_d)$ is the set of {\bf distinct} contraction ratios, $k_1\ge 2$ (so the number of maps in the IFS is strictly greater than $d$), $\ab$ is a vector of translations, and $\pb$ is a probability vector. Let $\mu_{\gb,\ab}^{\pb}$ be the corresponding self-similar measure. 
Fix $\epsilon>0$ and $1 < B_1 < B_2 < \infty$. Assume that
\be \label{cond1}
0 < B_2^{-1} \le \gam_{\min} < \gam_{\max} \le B_1^{-1}<1.
\ee
Let $s>0$ be such that 
\be \label{cond2}
B_1^s > d.
\ee
Then there exist $\alpha>0$ and $\Ek'\subset (0,1)^d$, depending on $d, B_1, B_2, s,\eps$, such that $\dim_H(\Ek')\le s$ and for all $\gb\in (0,1)^d\setminus \Ek'$, satisfying (\ref{cond1}), for all $\ab$ such that
\be \label{cond3}
a_2^{(1)} - a_1^{(1)}=\pi,
\ee
 and all $\pb$ such that $\min_j p_j \ge \epsilon$, we have ${\mu}_{\gb,\ab}^{\pb}\in \Dk(\alpha)$.
\end{theorem}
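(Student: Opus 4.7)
My plan is to develop a multi-parameter generalization of the Erdős-Kahane argument. Iterating the self-similarity identity $N$ times and grouping compositions by the sequence of family indices $\tau_\ast=(\tau_0,\ldots,\tau_{N-1})\in\{1,\ldots,d\}^N$ yields
$$
|\wh\mu_{\gb,\ab}^{\pb}(t)| \le \sum_{\tau_\ast} \prod_{i=0}^{N-1} |\phi_{\tau_i}(t\Lambda_i(\tau_\ast))| = \mathbb{E}_\pi\!\left[\prod_{i=0}^{N-1}\frac{|\phi_{\tau_i}(t\Lambda_i)|}{P_{\tau_i}}\right],
$$
where $\Lambda_i(\tau_\ast)=\prod_{n<i}\gamma_{\tau_n}$, $\phi_j(s)=\sum_{k=1}^{k_j}p_k^{(j)}e^{ia_k^{(j)}s}$, $P_j:=\sum_k p_k^{(j)}$, and $\pi$ is the i.i.d.\ product measure with $\pi(\tau_i=j)=P_j$.

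The reduction condition $a_2^{(1)}-a_1^{(1)}=\pi$, together with $k_1\ge 2$ and $\min p_k^{(j)}\ge \eps$, yields by Taylor expansion of $|p_1^{(1)}+p_2^{(1)}e^{i\pi s}|$ the key quantitative contraction
$$
|\phi_1(s)| \le P_1 - c_0\min(\|s\|_{2\Z}^2,1),\qquad c_0=c_0(\eps)>0,
$$
where $\|s\|_{2\Z}$ denotes the distance from $s$ to the nearest even integer. Bounding the other factors trivially by $1$ reduces the problem to showing that, after taking $N\asymp \log|t|$, for $\gb$ outside a small exceptional set $\Ek'$ the quantity $\Sigma_N(\gb,t,\tau_\ast):=\sum_{i<N,\,\tau_i=1}\min(\|t\Lambda_i\|_{2\Z}^2,1)$ satisfies $\Sigma_N\ge \eta N$ with $\pi$-probability at least $1-|t|^{-\alpha'}$. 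Via $|\wh\mu(t)|\le \mathbb{E}_\pi[e^{-c_1\Sigma_N}]$, this yields the claimed decay $|\wh\mu(t)|=O(|t|^{-\alpha})$.

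The crux is the parameter-exclusion step. Declare $\gb$ bad (at scale $t$) if there exist a $\pi$-typical $\tau_\ast$ and integers $(m_i)_{i\in S}$, with $S\subset\{i<N:\tau_i=1\}$ of size $\ge (P_1-\eta)N$, such that $t\Lambda_i(\tau_\ast,\gb)\in 2m_i+[-\beta,\beta]$ for every $i\in S$. Taking logarithms turns each such constraint into a linear inequality
$$
\log t+\sum_{j=1}^d n_j(i,\tau_\ast)\log\gamma_j\in \log(2m_i)+O(\beta),
$$
on $\log\gb\in\R^d$, so that for fixed $(\tau_\ast,\bar m)$ the admissible $\gb$ lies in a $d$-dimensional parallelepiped of diameter $\asymp B_1^{-N}$. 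There are $d^N$ choices of $\tau_\ast$; once $d$ initial digits $m_i$ effectively pin down $\gb$, the remaining digits are determined up to $O(1)$ by the ratio structure. After a dyadic decomposition over $t$ and summation over $N$, this covers $\Ek'$ by at most $(d\cdot B_1^{-s})^N$ balls of diameter $\asymp B_1^{-N}$; the hypothesis $B_1^s>d$ then drives the $s$-dimensional Hausdorff content to zero, giving $\dim_H\Ek'\le s$.

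The principal obstacle is precisely this multi-dimensional counting. In the classical homogeneous Erdős-Kahane argument the bad digit sequence $m_n\approx \gamma^{-n}m_0$ is essentially determined by a single initial digit, and so the $\gamma$-covering is essentially one-dimensional. Here the digits live on a random walk in $\N^d$ indexed by $\tau_\ast$, and the linear systems controlling $\log\gb$ depend sensitively on the walk. One must restrict to typical $\tau_\ast$ for which the relevant $d\times d$ submatrices of exponents $n_j(i,\tau_\ast)$ are non-degenerate with uniformly controlled inverses, so that the admissible parallelepiped is genuinely thin in all $d$ directions rather than collapsing into a lower-dimensional subvariety. Disentangling the joint dependence of $\gb$, $\tau_\ast$, and $\bar m$ — and making the covering count compatible with the $d^N$ combinatorial blow-up — is the genuinely new ingredient required to pass from the homogeneous to the non-homogeneous setting.
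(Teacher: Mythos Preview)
Your framework matches the paper's up to the point where you flag, but do not resolve, the central difficulty. The counting mechanism you propose---taking logarithms to turn the constraints into a linear system on $\log\gb$, with $d$ initial digits pinning down $\gb$ via a well-conditioned $d\times d$ subsystem of exponents---is not accompanied by any argument that such subsystems exist with uniformly controlled inverses along a generic path $\tau_\ast$, nor by a way to reconcile this with the $d^N$ choices of path. Your final paragraph is, in effect, an acknowledgment that the crux is missing rather than a proof.

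The paper avoids the linear-algebra route altogether. Its key device is to track, along the lattice path $\nb_0\to\nb_1\to\cdots$, not just the integers $K_{\nb_i}$ but all $K_{\vb}$ with $\vb-\nb_i\in\{0,1\}^d$. The recursion is then purely one-dimensional: from $K_{\nb+2\eb_j}$ and $K_{\nb+\eb_j}$ one recovers $K_{\nb}$ up to $O(1)$ choices, and uniquely when none of the three is ``good'', via the ratio trick $K_{\nb}\approx K_{\nb+\eb_j}^2/K_{\nb+2\eb_j}$. The $\{0,1\}^d$ enlargement is exactly what allows this coordinate-wise recursion to propagate backward along an \emph{arbitrary} path (each backward step updates the whole $2^d$-layer using the previous one), so the covering count closes with no matrix inversion and the $d^N$ factor is absorbed by the hypothesis $B_1^s>d$. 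There is a second nontrivial step you gloss over: even once every path has many vertices \emph{near} a good vertex, one still must show that $\pi$-most paths actually hit many good vertices and then step in direction $1$. The paper does this by a $d$-stage submartingale/Azuma--Hoeffding cascade; your one-line passage to ``$\Sigma_N\ge\eta N$ with $\pi$-probability $1-|t|^{-\alpha'}$'' hides this.
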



\begin{proof}[Derivation of Theorem~\ref{th-main1} from Theorem~\ref{th-tech}] As already mentioned, we may assume that the original IFS $\{\lam_j x + b_j\}_{j=1}^m$ has $f_1(x) = \lam_1 x,\ f_2 (x) = \lam_2 x + b$, with $b>0$ arbitrary (we do not exclude the case $\lam_1=\lam_2$). Passing to the $\ell$-th iterate, we obtain an IFS with the number of maps equal to $m^\ell$ and the number of distinct contractions less than or equal to 
$$
d={{\ell+m-1}\choose{m-1}},
$$
which is the number of ways to write $\ell$ as a sum of $m$ non-negative integers. Among the maps of the new IFS there are
$$
f_1^{\ell-1}f_2(x) = \lam_1^{\ell-1}(\lam_2 x + b)\ \ \mbox{and}\ \ f_2 f_1^{\ell-1}(x) = \lam_2 \lam_1^{\ell-1} x + b.
$$
This way, we can let $\gam_1 = \lam_2 \lam^{\ell-1}_1$, $a_1^{(1)}= \lam_1^{\ell-1}b,\ a_2^{(1)} = b$, so that $a_2^{(1)} - a_1^{(1)}= b(1-\lam_1^{\ell-1})$, and  choose $b = \pi (1-\lam_1^{\ell-1})^{-1}$ to satisfy (\ref{cond3}).
Denote the new IFS by $\Fk_{\gb,\ab}^{\pb}$. Since the invariant measure remains unchanged when we pass to a higher iterate of the IFS, we have $\mu_{\lb,\Bb}^{\qb} = \mu_{\gb,\ab}^{\pb}$. The bounds for inverses of the contraction ratios of  $\Fk_{\gb,\ab}^{\pb}$ are
$$
B_1 = C_1^{\ell},\ \ B_2 = C_2^{\ell},
$$
and the probabilities satisfy $\min_{j,k} p_k^{(j)} \ge (\min_i q_i)^\ell \ge \epsilon^\ell$.
In order to satisfy (\ref{cond2}), we need
$$
C_1^{\ell s} > d \ \Longleftrightarrow\ \ell > \frac{\log d}{s\log C_1}\,.
$$
Since $d < (\ell+m)^m$, it is enough to choose $\ell\ge 2$ so that
$$
\ell > \frac{m\log(\ell+m)}{s\log C_1},
$$
which is certainly possible. Now we apply Theorem~\ref{th-tech} and obtain an exceptional set $\Ek'\subset (0,1)^d$ of Hausdorff dimension $\le s$, such that for all $\gb\in {[C_2^{-\ell},C_1^{-\ell}]}^d\setminus \Ek'$, for all vectors of translations $\ab$ satisfying (\ref{cond3}) and all probability vectors $\pb$, with $\min_{j,k} p_k^{(j)} \ge \epsilon^\ell$, holds
$\mu_{\gb,\ab}^{\pb}\in \Dk(\alpha)$, for some $\alpha = \alpha(d,B_1,B_2,\epsilon^\ell,s)$. 

 It remains to observe that we can recover $\lb$ from $\gb$ via a function which does not increase Hausdorff dimension. For instance, among the contraction ratios of $\Fk_{\gb,\ab}^{\pb}$ there are $\lam_1^\ell,\ldots,\lam_m^\ell$. We can project $\gb$ to these coordinates and then take $\ell$-th root component-wise, to obtain $\lb$. This map is Lipschitz outside of the neighborhood of zero of radius $C_2^{-\ell}$. We obtain an exceptional set $\wtil{\Ek}\subset (0,1)^m$ of $\lb$ as an image of $\Ek'$ under this map, and $\dim_H(\wtil \Ek) \le \dim_H(\Ek') \le s$. For all $\lb \in{[B_2^{-1},B_1^{-1}]}^m\setminus \wtil \Ek$, all $\Bb$ satisfying $b_1=0, b_2 = \pi(1-\lam_1^{\ell-1})$, and all probability vectors $\qb$, with $\min_i q_i \ge \epsilon$, we have $\mu_{\lb,\Bb}^{\qb} = \mu_{\gb,\ab}^{\pb}\in \Dk(\alpha)$. This completes the proof of the derivation.
\end{proof}

The rest of the paper is devoted to the proof of Theorem~\ref{th-tech}.

 
\section{Beginning of the Proof}
We consider the Fourier transform $\muhat(t) = \int_\R e^{itx}\,d\mu(x)$, where $\mu = \mu_{\gb,\ab}^{\pb}$ is the invariant measure for the IFS (\ref{ifs-tech}), that is,
$$
\mu = \sum_{j=1}^d \sum_{k=1}^{k_j} p_k^{(j)} \left( \mu \circ (f_k^{(j)})^{-1}\right),\ \ \mbox{where}\ \ f_k^{(j)} (x) = \gam_j x + a_k^{(j)}.
$$
It follows that
$$
\muhat(t) = \sum_{j=1}^d \Bigl(\sum_{k=1}^{k_j} p_k^{(j)} e^{ia_k^{(j)}t}\Bigr) \muhat(\gam_j t).
$$
We can estimate
\be \label{ineq0}
|\muhat(t)| \le \Bigl( \bigl|p_1^{(1)} + p_2^{(1)} e^{i (a_2^{(1)} - a_1^{(1)})t}\bigr| + \sum_{k=3}^{k_1} p_k^{(1)}\Bigr) \cdot |\muhat(\gam_1 t)| + \sum_{j=2}^d \Bigl(\sum_{k=1}^{k_j} p_k^{(j)}\Bigr) |\muhat(\gam_j t)|.
\ee
Denote
$$
p_j:= \sum_{k=1}^{k_j} p_k^{(j)},\ j=1,\ldots,d.
$$
Recall that $a_2^{(1)} - a_1^{(1)}=\pi$ by assumption, and use an elementary inequality
$$
|1 + e^{\pi i z}| \le 2 (1-\textstyle{\frac{\pi}{4}}z^2) \ \ \mbox{for}\ z \in[-1/2,1/2].
$$
We then obtain from (\ref{ineq0}), denoting by $\|t\|$ the distance from $t\in \R$ to the nearest integer:
\be \label{ineq1}
|\muhat(t)| \le p_1\Bigl(1-\frac{\pi \eps}{2}\|t\|^2\Bigr) |\muhat(\gam_1 t)| + \sum_{j=2}^d p_j|\muhat(\gam_j t)|,
\ee
using that $\min_k p_k^{(1)}\ge \eps$.

Next we introduce some notation. Let $\Ak = \{1,\ldots,d\}$. For a word $w\in \Ak^*$ let $\ell_j(w)$ be the number of $j$'s in $w$, and let $\ell(w) = (\ell_j(w))_{j=1}^d \in \Z_+^d$. 
For $\nb=(n_j)_{j=1}^d\in \Z^d_+$ we will write
$$
\gb^{\nb} = \prod_{j=1}^d \gam_j^{n_j},\ \ \bp^{\nb} = \prod_{j=1}^d p_j^{n_j},
$$
where  $\bp: = (p_1,\ldots,p_d)$. (Note that $\bp \ne \pb$; hopefully, this will not cause a confusion; in any case, we do not need $\pb$ any more.)
Further, let $w[1,i]$ be the prefix of $w$ of length $i$; if $i=0$, this is empty word, by convention.

Iterating (\ref{ineq1}) we obtain
\be \label{ineq2}
|\muhat(t)| \le \sum_{w\in \Ak^N} \bp^{\ell(w)} |\muhat(\gb^{\ell(w)}t)| \prod_{i:\ w_i=1}\Bigl(1 - \frac{\pi \eps}{2}\|\gb^{\ell(w[1,i-1])}t\|^2\Bigr).
\ee

\begin{notation}{\em
We will consider $\Z^d_+$ as the vertex set of a directed graph, with a directed edge going from $\nb\in \Z^d_+$ to each of $\nb'=\nb+\eb_j,\, j\le d$, where $\eb_j$ is  $j$'th unit vector. We will then 
write $\nb\to\nb'$. A vertex $\nb'$ is a descendant of $\nb$ of level $r\ge 1$ if there is a path of length $r$ from $\nb$ to $\nb'$ (the length of a path is the number of edges). 
We will identify a word $w\in \Ak^N$ with a path of length $N$ in $\Z^d_+$, formed by the sequence of vertices $\{\ell(w[1,i]):\ i=0,\ldots,N\}$ and denote this path by $\Gamma(w)$. It is clear that $\ell(w[1,i])\to \ell(w[1,i+1])$ for $i=0,\ldots,N-1$.

 We will write $\nb\leadsto \nb'$ if $\nb'$ is a descendant of $\nb$ and $\|\nb'-\nb\|_\infty \le 1$.
Equivalently, $\nb\leadsto \nb'$ iff $\nb' = \nb + \sum_{\kappa \in \Gam} \eb_\kappa$ for some, possibly empty, subset $\Gam\subset \Ak$. Thus $\nb\leadsto\nb'$ implies that either $\nb'=\nb$, or $\nb'$ is a descendant of $\nb$ of level $\le d$.
}
\end{notation}

\begin{defi}
Let $\rho\in (0,\half)$, $t>0$, and $\gb \in {[B_2^{-1}, B_1^{-1}]}^d$. Say that a vertex $\nb\in \Z^d_+$ is \underline{\em $(\gb,t,\rho)$-good} if $\gb^{\nb}t\ge 1$ and
$$
\|\gb^{\nb} t\|\ge \rho
$$
(recall that $\|\cdot\|$  denotes the distance from the nearest integer).

Further, say that a vertex $\nb\in \Z^d_+$ is \underline{\em ``on a $(\gb,t,\rho)$-good track''} if there exists $\nb'$ that is $(\gb,t,\rho)$-good and $\nb\leadsto \nb'$. 

Finally, we say that an \underline{edge} $[\nb,\nb']$ is \underline{$(\gb,t,\rho)$-{\em good}} if $\nb$ is $(\gb,t,\rho)$-good and $\nb' = \nb+\eb_1$. (Notice that the 1-st coordinate, corresponding to $w_i=1$, is ``special'' by construction, see (\ref{cond3}) and (\ref{ineq2}).)
\end{defi}

Consider  $t\in (B_1^{N-1},B_1^N]$. Then $\gb^{\ell(w)}t\le 1$ for all $w\in \Ak^N$, by the assumption $\gam_{\max} \le B_1^{-1}$. It follows from (\ref{ineq2}), roughly speaking, that in order to have a power decay for $\muhat(t)$ for $t$ at this scale, it is sufficient that for ``most'' (up to exponentially small number) words $w\in \Ak^N$ there is a fixed positive proportion of
$(\gb,t,\rho)$-good edges on the path corresponding to $w$, for some $\rho>0$.
With this in mind, we define the exceptional set of $\gb$ at scale $N$ as follows: 

\begin{defi}
Fix $k_1\in \N$ and $\rho>0$, and let $\Ek_N=\Ek_N(k_1,\rho)$ be the set of $\gb \in {[B_2^{-1}, B_1^{-1}]}^d$ such that there exists $t\in (B_1^{N-1},B_1^N]$ and a word $w\in \Ak^N$ with the properties:
\be \label{property1}
\#\bigl\{d+1 \le i\le N-d-1:\ \ell(w[1,i])\ \mbox{\em is ``on a $(\gb,t,\rho)$-good track''}\bigr\} \le \frac{N}{k_1}.
\ee
Further, we define the exceptional set by
$
\Ek':=\Ek'(k_1,\rho)= \limsup \Ek_N(k_1,\rho).
$
\end{defi}

Let
\be \label{def-rho}
\rho:= \frac{1}{4(1+B_2)(1+3B_2)}\,.
\ee

Theorem~\ref{th-tech} will follow, once we prove the  next two propositions:

\begin{prop} \label{prop-decay}
For all $k_1\in \N$ sufficiently large, there exists $\alpha>0$, depending on $d, k, B_1, B_2, \eps$, such that for all $\gb\in {[B_2^{-1}, B_1^{-1}]}^d\setminus
\Ek'(k_1,\rho)$, for all $\pb$, with $\min_j p_j \ge \eps$, and all $\ab$ satisfying (\ref{cond3}), we have $\mu={\mu}_{\gb,\ab}^{\pb}\in \Dk(\alpha)$.
\end{prop}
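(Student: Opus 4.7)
The plan is to view the right-hand side of (\ref{ineq2}) as an expectation under the product measure $\bp^{\otimes N}$ on $\Ak^N$ and exploit the hypothesis $\gb\notin\Ek'(k_1,\rho)$ through a Chernoff-type bound on the number of good edges encountered along a random path. Bounding $|\muhat(\gb^{\ell(w)}t)|\le 1$ in (\ref{ineq2}) and observing that $\bp^{\ell(w)}=\prod_{i=1}^N p_{w_i}$ is exactly the probability of $w$ under the product measure on $\Ak^N=\{1,\ldots,d\}^N$ with marginal $\bp=(p_1,\ldots,p_d)$ (which satisfies $\min_j p_j\ge\eps$), I would rewrite (\ref{ineq2}) as an expectation. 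Each factor in the product in (\ref{ineq2}) is at most $1$, and at most $1-c\rho^2$ (with $c:=\pi\eps/2$) whenever the corresponding edge $[\ell(w[1,i-1]),\ell(w[1,i])]$ is $(\gb,t,\rho)$-good. Writing $G(w)$ for the number of such good edges on the path $\Gamma(w)$ and using $1-x\le e^{-x}$, I obtain
$$
|\muhat(t)|\le \E_w\bigl[\exp(-c\rho^2\, G(w))\bigr].
$$
It therefore suffices to show $\E_w[\exp(-c\rho^2 G(w))]\le e^{-\beta N}$, uniformly in $t\in(B_1^{N-1},B_1^N]$, for some $\beta>0$; this yields $|\muhat(t)|=O(t^{-\alpha})$ with $\alpha=\beta/\log B_1$.

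Next, since $\gb\notin\Ek'(k_1,\rho)=\limsup_N\Ek_N(k_1,\rho)$, for all $N\ge N_0(\gb)$, every $t\in(B_1^{N-1},B_1^N]$ and every $w\in\Ak^N$ satisfy $H(w)>N/k_1$, where $H(w)$ counts indices $d+1\le i\le N-d-1$ for which $\ell(w[1,i])$ is on a $(\gb,t,\rho)$-good track. From these indices I would greedily select stopping times $\tau_1<\tau_2<\cdots<\tau_L$ with $\tau_{k+1}>\tau_k+d+1$: since each selected $\tau_k$ blocks at most $d+2$ candidate indices, this yields $L\ge H(w)/(d+2)\ge L_0:=\lfloor N/(k_1(d+2))\rfloor$ deterministically, with windows $(\tau_k,\tau_k+d+1]$ pairwise disjoint.

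The heart of the argument is to convert the good-track hypothesis into a quantitative lower bound on the conditional probability of producing a good edge inside each window. By definition of good-track there is $\Gam_k\subset\Ak$ with $|\Gam_k|\le d$ such that $\ell(w[1,\tau_k])+\sum_{\kappa\in\Gam_k}\eb_\kappa$ is good; conditional on $\Fk_k:=\sigma(w_1,\ldots,w_{\tau_k})$, the probability that $w_{\tau_k+1},\ldots,w_{\tau_k+|\Gam_k|}$ is some ordering of $\Gam_k$ followed by $w_{\tau_k+|\Gam_k|+1}=1$ is at least $q:=\eps^{d+1}$, and in that event the window $(\tau_k,\tau_k+d+1]$ contains a good edge. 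Letting $E_k$ denote the indicator of this event, disjointness of windows gives $G(w)\ge\sum_{k=1}^L E_k\ge\sum_{k=1}^{L_0}E_k$, and iterating the tower property yields
$$
\E_w\bigl[e^{-c\rho^2 G(w)}\bigr]\le \E_w\Bigl[\prod_{k=1}^{L_0}e^{-c\rho^2 E_k}\Bigr]\le \bigl(1-q(1-e^{-c\rho^2})\bigr)^{L_0}=e^{-\beta N},
$$
with $\beta:=-(k_1(d+2))^{-1}\log(1-q(1-e^{-c\rho^2}))>0$ depending only on $d$, $k_1$, $\eps$, $B_2$.

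The main difficulty is precisely this third step: translating the combinatorial ``on a good track'' statement, which merely asserts the existence of a nearby good vertex and does not force the random path to visit it, into a genuine probabilistic lower bound. The $\leadsto$-relaxation in the definition of $\Ek_N$ is tailor-made for this, forcing the witness $\nb'$ to differ from $\nb$ by $0$ or $1$ in each coordinate and hence to be reachable in at most $d$ prescribed unit steps, so that a specific sequence of $|\Gam_k|$ letters reaches it with probability at least $\eps^d$; this also keeps the windows short enough to remain pairwise disjoint under the greedy selection, letting the Chernoff-type bound above deliver exponential decay in $N$. The ``$k_1$ sufficiently large'' qualifier in the statement is immaterial for this bound on $\alpha$ --- smaller $k_1$ only helps --- and presumably serves the forthcoming dimension estimate on $\Ek'$.
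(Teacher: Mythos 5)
Your proof is correct, and it takes a genuinely different route from the paper's. The paper works with the counts $X_i^{(r)}$ of vertices on $\Gamma(w[1,i])$ having a good vertex among their $r$-level descendants, notes that $X_N^{(d)}\ge N/((d+1)k_1)$ holds deterministically, and then runs a downward induction on $r$ from $d$ to $0$: at each step it shows that $Z_i^{(r)}=X_{i+1}^{(r)}/p_{\min}-X_i^{(r+1)}$ is a submartingale and applies the one-sided Azuma--Hoeffding inequality, and finally passes from good vertices to good edges with one more martingale $U_i=Y_{i+1}/p_1-X_i$; this yields $\P(Y_N<\delta N)\le C'e^{-cN}$, and the decay follows by splitting the sum in (\ref{ineq2}) according to whether $Y_N(w)<\delta N$. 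Your argument short-circuits that chain: the greedy extraction of $\gtrsim N/(k_1(d+2))$ pairwise disjoint windows headed by stopping times at good-track indices, together with the observation that within each window the path reaches the witnessing good vertex (at most $d$ prescribed unit steps, since $\leadsto$ forbids repeating a coordinate) and then takes an $\eb_1$-step with conditional probability at least $\eps^{d+1}$, gives the bound on $\E\bigl[e^{-c\rho^2 G}\bigr]$ in a single application of the tower property, with no Azuma--Hoeffding and no induction over descendant levels. The two reductions at the ends (bounding $|\muhat|$ by the exponential moment of the good-edge count, and converting exponential decay in $N$ into power decay in $t$) match the paper's. Your version is more elementary and yields an explicit $\beta$; the only points worth making explicit in a write-up are that ``$\ell(w[1,i])$ is on a good track'' is determined by the prefix $w[1,i]$ (so the $\tau_k$ really are stopping times and the letters beyond $\tau_k$ remain i.i.d.\ conditionally on $\Fk_{\tau_k}$), and that the restriction $\tau_k\le N-d-1$ guarantees each window fits inside $[1,N]$. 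You are also right that the ``$k_1$ sufficiently large'' clause is there only for the dimension bound on $\Ek'$ in Proposition~\ref{prop-excep}.
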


\begin{prop} \label{prop-excep}
For all $k_1\in \N$ sufficiently large we have $\dim_H(\Ek'(k_1,\rho))< s$.
\end{prop}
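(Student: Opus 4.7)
The plan is a multi-parameter Erd\H{o}s--Kahane-style covering argument (in the spirit of \cite{PSS00}), combined with Borel--Cantelli applied to $\Ek'=\limsup_N\Ek_N$. For each $N$ I will produce a cover $\Ck_N$ of $\Ek_N$ by boxes of side $\lesssim B_1^{-N}$ with $\#\Ck_N\le K^N$, for some $K=K(d,B_1,B_2,\rho,k_1)$ with $K<B_1^s$ once $k_1$ is large enough; the estimate
\[
\Hk^s(\Ek')\le\liminf_{M\to\infty}\sum_{N\ge M}\#\Ck_N\cdot (B_1^{-N})^s\lesssim \sum_{N\ge 1}(K/B_1^s)^N<\infty
\]
then gives $\dim_H(\Ek')\le s$, with the strict inequality $K/B_1^s<1$ upgrading this to $\dim_H(\Ek')<s$.

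The signatures indexing $\Ck_N$ are built as follows. For each $\gb\in \Ek_N$, fix a witness $(w,t)$; at every bad vertex $\nb=\ell(w[1,i])$, the definition of $\leadsto$ forces $\|\gb^{\nb+\boldsymbol{\epsilon}}t\|<\rho$ for all $\boldsymbol{\epsilon}\in\{0,1\}^d$ with $\gb^{\nb+\boldsymbol{\epsilon}}t\ge 1$. Writing $m_{\nb+\boldsymbol{\epsilon}}$ for the nearest integer to $\gb^{\nb+\boldsymbol{\epsilon}}t$, the identity $\gam_j=m_{\nb+\eb_j}/m_{\nb}+O(\rho/m_{\nb})$ shows that a single bad vertex pins down $\gb$ to precision $O(\rho/m_{\nb})$, and via $t\approx m_{\nb}\gb^{-\nb}$ also determines $t$. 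The signature of $\gb$ is $(w,(m_{\nb+\boldsymbol{\epsilon}})_{\boldsymbol{\epsilon},\,i\,\text{bad}})$, and $\Ck_N$ consists of one $B_1^{-N}$-box per signature around the $\gb$ it determines.

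Counting signatures is the combinatorial heart of the proof and the source of the $k_1$-dependence. The word contributes $d^N$ options. Calibration at the first bad vertex requires a $(d+1)$-tuple of integers, with up to $B_1^{O(N)}$ choices in total, but each subsequent bad vertex then acts both as \emph{book-keeping} (the prior data already localises $(\gb,t)$ in $\R^{d+1}$, so only $O(1)$ integers can lie near the predicted $\gb^{\nb+\boldsymbol{\epsilon}}t$) and as a \emph{consistency check} (the prediction must fall within $\rho$ of an integer, which a typical calibration fails). A Markov-chain-type accounting in the style of \cite{PSS00}---performed with states in $\R^{d+1}$ tracking $(\gb,t)$---shows that the consistency-check exponential decay in $N$ cancels the initial $B_1^{O(N)}$ free calibration, yielding a bound $\#\Ck_N\le K^N$ with $K\to d$ as $k_1\to\infty$ (at least $(1-1/k_1)N-2d$ bad vertices are available as consistency checks, and only a constant fraction is needed to absorb the calibration). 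The hypothesis $B_1^s>d$ of (\ref{cond2}) then secures $K<B_1^s$ once $k_1$ is large enough.

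The main obstacle is carrying out this multi-parameter Markov accounting rigorously. Unlike the single-parameter Erd\H{o}s--Kahane argument, where the integer sequence is Markovian in a scalar state, here one must track $(\gb,t)$ jointly in $\R^{d+1}$ while consecutive bad vertices advance in different coordinate directions, coupling all $d$ parameters through the multiplicative interaction $\gb^{\nb}t$. The $\leadsto$-structure in the definition of ``good track''---supplying $2^d$ simultaneous near-integer constraints per bad vertex, hence determining $\gb$ from a single vertex---is precisely the technical device making the calibration-plus-consistency accounting feasible in this setting; the boundary effects at the first and last $d+1$ steps (where $\gb^\nb t\ge 1$ may fail for some $\boldsymbol{\epsilon}$) are negligible and handled by the exclusion $d+1\le i\le N-d-1$ in the definition of $\Ek_N$.
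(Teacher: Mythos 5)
Your overall architecture is the right one---cover $\Ek_N$ by balls of radius $\sim B_1^{-N}$ indexed by signatures built from the word $w$ and the nearest integers to $\gb^{\nb}t$, then sum $\#\Ck_N\cdot B_1^{-Ns}$ over $N$---and your final Hausdorff-measure summation and the role of the hypothesis $B_1^s>d$ are correct. But the heart of the matter, the bound $\#\Ck_N\le K^N$ with $K$ close to $d$, is exactly the step you do not prove, and the mechanism you propose for it does not work. You calibrate at the first bad vertex with $B_1^{O(N)}$ choices of integer tuple and then assert that ``consistency-check exponential decay in $N$ cancels the initial $B_1^{O(N)}$ free calibration'' because ``a typical calibration fails.'' In a deterministic covering argument there is no measure on calibrations: what you must bound is the number of calibrations that \emph{survive} all the checks, and ``most fail'' gives no upper bound on that number. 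Worse, the surviving calibrations are (up to bounded multiplicity) in bijection with the $B_1^{-N}$-boxes meeting $\Ek_N$ with witness $w$, which is precisely the quantity you are trying to estimate, so the forward calibrate-then-check scheme is circular as stated.

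The missing idea, which is how the paper closes this gap, is to run the recursion \emph{backward from the small end of the path}. Take $q$ maximal with $\gb^{\ell(w[1,q])}t\ge B_2^d$; then every $K_{\vb}$ with $\nb_q\leadsto\vb$ lies in $[1,B_2^{d+1}]$, so the ``initial configuration'' costs only the constant $B_2^{(d+1)2^d}$, not $B_1^{O(N)}$. One then recovers the $K_{\vb}$'s along $\nb_q,\nb_{q-1},\dots,\b0$ via the three-term relation $\bigl|K_{\nb}-K_{\nb'}^2/K_{\nb''}\bigr|\le 2(1+B_2)(1+3B_2)\max\{|\eps_{\nb}|,|\eps_{\nb'}|,|\eps_{\nb''}|\}$ (Lemma~\ref{lem-choice} and Corollary~\ref{cor-choice}): each backward step is uniquely determined unless a good-track vertex is involved, which happens for at most $2N/k_1$ transitions, each costing a bounded factor $A^{2^d}$. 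Together with $\binom{N}{\lfloor N/k_1\rfloor}$ choices for the positions of the good-track vertices and $d^N$ words, this yields $\exp\bigl(O(\tfrac{\log k_1}{k_1})N\bigr)\cdot d^N$ signatures, and the $B_1^{-N}$ radius then comes from $|\gam_j^{-1}-K_{\b0}/K_{\eb_j}|\le(1+B_2)/K_{\eb_j}$ with $K_{\eb_j}\gtrsim B_1^{N}$, i.e.\ from the integers at the very start of the path rather than at an arbitrary bad vertex (where $m_{\nb}$ may be small and the precision $O(\rho/m_{\nb})$ useless). Without this reversal, or an equivalent device making the free calibration $O(1)$, your count does not close.
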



\section{Fourier decay for non-exceptional $\gb$}

Fix $\gb\not\in \Ek'=\Ek'(k_1,\rho)$, where $\rho$ is given by (\ref{def-rho}) and $k_1$ is fixed, sufficiently large. (A specific value for $k_1$ will be chosen in (\ref{choose-k}).) Then $\gb\not \in \Ek_N=\Ek_N(k_1,\rho)$ for all $N$ sufficiently large. Fix such an $N$. 
The condition $\gb\not\in \Ek_N$ means, by definition, that for every $t\in (B_1^{N-1},B_1^N]$ and for every $w\in \Ak^N$, the number of vertices ``on a $(\gb,t,\rho)$-good track''
on the path $\Gamma(w[d+1,N-d-1])$ is greater than $N/k_1$. Fix $t\in (B_1^{N-1},B_1^N]$.
Since $\gb$, $t$, and $\rho$ are now fixed, we will omit $(\gb,t,\rho)$ when talking about vertices and edges that are good or ``on a good track''.

We will consider $\Ak^N$ as a probability space, with the Bernoulli measure $\Prob=\bp^N$, and the ``random environment'' provided by the configuration of good vertices and edges. Let $p_{\min}:= \min_{j\le d} p_j$. 
Let us introduce the following random variables for $i=1,\ldots,N$:
\begin{itemize}
\item for $r\ge 1$, $X_i^{(r)}$ is the number of vertices on the path $\Gamma(w[1,i])$ having a good vertex among its $r$-level descendants;
\item $X_i=X_i^{(0)}$ is the number of good vertices on the path $\Gamma(w[1,i])$;
\item $Y_i$ is is the number of good edges on the path $\Gamma(w[1,i])$.
\end{itemize}

Notice that for every vertex of $\Gamma(w[d+1,N])$ that is ``on a good track'', there is a vertex of $\Gamma(w)$ that had a good vertex among its $d$-level descendants, and this mapping is at most $(d+1)$-to-$1$. It follows that, with probability one,
\be \label{flug1}
X_N^{(d)} \ge  \frac{N}{(d+1)k_1}\,.
\ee

\begin{lemma} \label{lem-proba}
There exist $\delta>0$, $C'>0$, and $c>0$, depending only on $p_{\min}$ and $k_1$, such that, assuming $N$ is sufficiently large (depending only on $p_{\min}$ and $k_1$), holds
$$
\P\left(Y_N < \delta N \right) \le C'\exp(-c N).
$$
\end{lemma}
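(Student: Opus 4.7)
The plan is to upgrade the almost-sure lower bound (\ref{flug1}) on $X_N^{(d)}$ into a high-probability lower bound on $Y_N$. The driving observation is that each vertex $\ell(w[1,i])$ on $\Gamma(w)$ possessing a good $d$-level descendant $\nb^*$ constitutes an ``opportunity'': conditional on $\Fk_i:=\sigma(w_1,\ldots,w_i)$, one particular ordered sequence of $d$ letters steers the path from $\ell(w[1,i])$ to $\nb^*$ with probability at least $p_{\min}^d$ (each letter contributing probability at least $p_{\min}$), and then an $\eb_1$-step of probability $p_1\ge p_{\min}$ creates a good edge originating at the good vertex $\nb^*$. Hence each opportunity produces a good edge inside its own $(d+1)$-step window with conditional probability at least $p_{\min}^{d+1}$.

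The main obstacle is that opportunities are detected via a path-dependent condition and their $(d+1)$-step windows can overlap, so there is no direct independence to exploit. To decouple them, I would introduce stopping times $\tau_1<\tau_2<\cdots$ greedily: let $\tau_1$ be the smallest index $i\le N-d-1$ with $\ell(w[1,i])$ opportune, and let $\tau_{k+1}$ be the smallest such index with $\tau_{k+1}>\tau_k+d$. Each $\tau_k$ ``absorbs'' at most $d+1$ opportune indices (those in $[\tau_k,\tau_k+d]$), so a pigeonhole argument combined with (\ref{flug1}) gives $K\ge K^*:=\lfloor N/(2(d+1)^2 k_1)\rfloor$ well-defined stopping times almost surely, for $N$ sufficiently large. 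The windows $W_k:=[\tau_k+1,\tau_k+d+1]$ are then pairwise disjoint and contained in $[1,N]$.

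Let $E_k$ be the event that $W_k$ contains a good edge produced by the opportunity mechanism, and set $D_k:=\One_{E_k}-\P(E_k\mid\Fk_{\tau_k})$. Since each $\tau_k$ is an $(\Fk_j)$-stopping time and $\tau_{k+1}\ge\tau_k+d+1$, the sequence $(D_k)_{k\le K^*}$ is a bounded martingale difference sequence with respect to $(\Fk_{\tau_k+d+1})_{k\le K^*}$, with $|D_k|\le 1$, and $\P(E_k\mid\Fk_{\tau_k})\ge p_{\min}^{d+1}$ on $\{k\le K^*\}$. Because distinct windows give rise to distinct good edges, $Y_N\ge\sum_{k\le K^*}\One_{E_k}$. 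Azuma--Hoeffding applied with deviation $\lambda=\tfrac12 K^* p_{\min}^{d+1}$ then yields
$$
\P\bigl(Y_N<\tfrac12 K^* p_{\min}^{d+1}\bigr)\le\exp\bigl(-\tfrac18 K^* p_{\min}^{2(d+1)}\bigr),
$$
which is of the claimed form $C'\exp(-cN)$ with $\delta,c,C'>0$ depending only on $d$, $k_1$, and $p_{\min}$. The only delicate step is the measurability/filtration bookkeeping for the random stopping times; the greedy pigeonhole and the Azuma application are routine once the setup is in place.
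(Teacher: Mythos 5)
Your proposal is correct, and it reaches the conclusion by a genuinely different decomposition than the paper. The paper keeps the one-step structure of the path and works level by level: it first proves (Lemma~\ref{lem-proba1}) by downward induction on $r=d,d-1,\dots,0$ that the number $X_{N-1}^{(r)}$ of vertices with a good $r$-level descendant is linear in $N$ with high probability, using at each level the submartingale $Z_i^{(r)}=X_{i+1}^{(r)}/p_{\min}-X_i^{(r+1)}$ and Azuma--Hoeffding; it then converts good vertices into good edges with one more martingale $U_i=Y_{i+1}/p_1-X_i$. You instead collapse all $d+1$ levels into a single step: from the sure bound (\ref{flug1}) you greedily extract $\gtrsim N/((d+1)^2k_1)$ stopping times with pairwise disjoint $(d+1)$-step windows, observe that each window produces a good edge with conditional probability at least $p_{\min}^{d+1}$ (steer the path to the good $d$-level descendant, then take an $\eb_1$-step), and apply Azuma--Hoeffding once to the resulting bounded martingale difference sequence. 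Your scheme checks out: opportuneness at index $i$ is $\Fk_i$-measurable because the environment of good vertices is deterministic once $\gb,t,\rho$ are fixed; the windows are disjoint so distinct successes give distinct good edges and $Y_N\ge\sum_{k\le K^*}\One_{E_k}$; and the filtration bookkeeping works since $\tau_k+d+1\le\tau_{k+1}$, so $D_k$ is measurable with respect to $\Fk_{\tau_k+d+1}\subseteq\Fk_{\tau_{k+1}}$ and the tower property gives the martingale difference property. The trade-off is that your constant $\delta\sim p_{\min}^{d+1}/((d+1)^2k_1)$ comes out explicitly in one shot at the cost of stopping-time machinery, whereas the paper avoids stopping times entirely at the cost of $d+1$ chained concentration estimates and the intermediate random variables $X_i^{(r)}$; both give bounds of the same exponential form with constants depending only on $d$, $p_{\min}$ and $k_1$.
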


We first deduce  power Fourier decay for $\gb\not\in \Ek'$ from the lemma. 

\begin{proof}[Proof of Proposition~\ref{prop-decay}]
Consider the sum in the inequality (\ref{ineq2}) and split it according to whether $Y_N=Y_N(w) <\delta N$ or $\ge \delta N$.  The sum over $w$ such that $Y_N(w)<\delta N$, is bounded by $\P\left(Y_N < \delta N \right)$. If $w$ is such that $Y_N\ge \delta N$, then the corresponding term in the right-hand side of (\ref{ineq2}) is estimated from above by $\bigl(1-\frac{\pi \eps}{2}\rho^2\bigr)^{\delta N}$, by the definition of a good edge (we also use the fact that $|\muhat(t)|\le 1$, since $\mu$ is a probability measure).  Then Lemma~\ref{lem-proba} implies, for $N$ sufficiently large:
\be \label{want7}
|\muhat(t)| \le \Bigl(1-\frac{\pi \eps}{2}\rho^2\Bigr)^{\delta N} + C'\exp(-c N).
\ee
Since $N$ was arbitrary, sufficiently large, and $t$ arbitrary in $(B_1^{N-1},B_1^N]$, this implies that $\mu\in \Dk(\alpha)$ for some $\alpha>0$.
\end{proof}

As a step in the proof of Lemma~\ref{lem-proba}, we will first establish the following

\begin{lemma} \label{lem-proba1} There exist $\delta_r>0$, $C_r'$, and $c_r>0$, for $r=0,\ldots, d$, such that, for $N$ sufficiently large,
\be \label{want8}
\P\left(X_{N-1}^{(r)} \le \delta_r N \right) \le C_r'\cdot\exp(-c_r N).
\ee
\end{lemma}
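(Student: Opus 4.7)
The plan is a downward induction on $r$, starting from $r=d$ (where the deterministic estimate (\ref{flug1}) applies) and descending to $r=0$, losing a factor of $p_{\min}\ge\eps$ at each step. The geometric engine is the following one-step propagation: if the vertex $\nb=\ell(w[1,j])$ has a good $r$-descendant $\nb+\vec{v}$ with $|\vec{v}|_1=r\ge 1$, then some coordinate $k$ has $v_k\ge 1$, and for any such $k$ the vertex $\nb+\eb_k=\ell(w[1,j+1])$ (the one reached when $w_{j+1}=k$) still has $\nb+\vec{v}$ as a good descendant, now of level $r-1$. Let $\One_j^{(r)}$ be the indicator that $\ell(w[1,j])$ has a good $r$-descendant, so $X_i^{(r)}=\sum_{j=0}^i \One_j^{(r)}$, and set $\Fk_j = \sigma(w_1,\ldots,w_j)$. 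Since under the Bernoulli measure $\Prob=\bp^N$ one has $\Prob(w_{j+1}=k\mid\Fk_j)=p_k\ge p_{\min}$, the geometric observation yields the pointwise bound
$$
\E\bigl[\One_{j+1}^{(r-1)}\mid\Fk_j\bigr]\;\ge\;p_{\min}\,\One_j^{(r)}.
$$

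The base case $r=d$ is immediate from (\ref{flug1}): $X_{N-1}^{(d)}\ge X_N^{(d)}-1\ge N/((d+1)k_1)-1$ almost surely, so $\delta_d:=(2(d+1)k_1)^{-1}$ works for all $N$ large with zero exceptional probability. For the inductive step $r\mapsto r-1$ with $1\le r\le d$, set $D_j := \One_{j+1}^{(r-1)}-\E[\One_{j+1}^{(r-1)}\mid\Fk_j]$ and $S_M := \sum_{j=0}^{M-1}D_j$. Then $S_M$ is a martingale with $|D_j|\le 1$, so Azuma--Hoeffding gives $\Prob(S_N\le -\eta N)\le \exp(-\eta^2 N/2)$ for any $\eta>0$. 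Summing the conditional-expectation inequality,
$$
X_N^{(r-1)}\;\ge\;\sum_{j=0}^{N-1}\One_{j+1}^{(r-1)} \;=\; S_N + \sum_{j=0}^{N-1}\E[\One_{j+1}^{(r-1)}\mid\Fk_j] \;\ge\; S_N + p_{\min}\,X_{N-1}^{(r)}.
$$
On the event $\{S_N\ge-\eta N\}\cap\{X_{N-1}^{(r)}>\delta_r N\}$ this gives $X_{N-1}^{(r-1)}\ge X_N^{(r-1)}-1 > p_{\min}\delta_r N-\eta N-1$. Choosing $\eta=p_{\min}\delta_r/2$ and $\delta_{r-1}=p_{\min}\delta_r/3$ (absorbing the $-1$ into the slack for $N$ large), the inductive hypothesis combined with the Azuma bound yields
$$
\Prob\bigl(X_{N-1}^{(r-1)}\le \delta_{r-1}N\bigr) \;\le\; \Prob(S_N\le -\eta N) + \Prob\bigl(X_{N-1}^{(r)}\le \delta_r N\bigr) \;\le\; C'_{r-1}\exp(-c_{r-1}N),
$$
completing the induction.

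The main obstacle is conceptually minor and largely notational: it is the geometric one-step propagation together with the off-by-one bookkeeping between $X_{N-1}^{(r)}$ and $X_N^{(r-1)}$; the concentration step itself is a standard application of Azuma--Hoeffding to bounded martingale differences. The procedure runs through $d$ stages, so $\delta_0$ comes out of order $\eps^d/k_1$, which is small but positive, and positivity is all Lemma~\ref{lem-proba} will require.
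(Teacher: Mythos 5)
Your proof is correct and follows essentially the same route as the paper: the same downward induction from $r=d$ to $r=0$, the same one-step propagation estimate $\E\bigl[\One_{j+1}^{(r-1)}\mid\Fk_j\bigr]\ge p_{\min}\One_j^{(r)}$, and Azuma--Hoeffding for the concentration step. The only cosmetic difference is that you center the increments to form a genuine martingale, whereas the paper works directly with the submartingale $Z_i^{(r)}=X_{i+1}^{(r)}/p_{\min}-X_i^{(r+1)}$ and invokes the one-sided Azuma inequality for submartingales.
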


\begin{proof}[Proof of Lemma~\ref{lem-proba1}] We will show this by induction in $r$, going  from $r=d$ to $r=0$. For $r=d$ the claim trivially holds, by (\ref{flug1}).
Fix $r\in\{ 0,\ldots,d-1\}$ and assume that (\ref{want8}) holds for $r+1$. Consider the sequence of random variables
$$
Z^{(r)}_i:=\frac{X^{(r)}_{i+1}}{p_{\min}} - X_i^{(r+1)},\ \ \ i=1,\ldots,N-1.
$$
We claim that this a submartingale; in fact,
\be\label{mart1}
\E[Z^{(r)}_i\vert Z^{(r)}_{i-1}]\ge Z^{(r)}_{i-1}.
\ee
Indeed, we have either (a) $X_i^{(r+1)}=X_{i-1}^{(r+1)}$, or  (b) $X_i^{(r+1)}=X_{i-1}^{(r+1)}+1$. The former case occurs when $\ell(v[1,i])$ has no good descendants of level $r+1$, and then
$\ell(v[1,i+1])$ has no good descendants of level $r$. Thus in case (a) we have $X_{i+1}^{(r)} = X_i^{(r)}$ and $Z_i^{(r)} = Z_{i-1}^{(r)}$.

In case (b), on the other hand, $\ell(v[1,i])$ has a good descendant of level $r+1$, and then $\ell(v[1,i+1])$ has a good descendant of level $r$, with 
probability $\ge p_{\min}$, independently of the past. Then  either  $X^{(r)}_{i+1}=X^{(r)}_i$ or $X^{(r)}_{i+1}=X^{(r)}_i+1$, hence $Z^{(r)}_i = Z^{(r)}_{i-1}-1$ or $Z^{(r)}_i = Z^{(r)}_{i-1}+\frac{1}{p_{\min}}-1$. 

Formally, we obtain
\begin{eqnarray*}
\E\bigl[Z^{(r)}_i\vert Z^{(r)}_{i-1}\bigr] & = & \E\bigl[Z^{(r)}_i\,\vert \,Z^{(r)}_{i-1},\ X^{(r+1)}_i=X^{(r+1)}_{i-1}\bigr] \cdot \P[X_i^{(r+1)}=X_{i-1}^{(r+1)}] \\[1.2ex]
& + & \E\bigl[Z^{(r)}_i\,\vert \,Z^{(r)}_{i-1},\ X^{(r+1)}_i=X^{(r+1)}_{i-1}+1\bigr] \cdot \P[X_i^{(r+1)}=X_{i-1}^{(r+1)}+1]\\[1.2ex]
& = & Z^{(r)}_{i-1}\cdot \P[X_i^{(r+1)}=X_{i-1}^{(r+1)}]  + (Z^{(r)}_{i-1}-1)\cdot \P[X_i^{(r+1)}=X_{i-1}^{(r+1)}+1,\,X^{(r)}_{i+1}=X^{(r)}_i] \\[1.2ex]
& + & (Z^{(r)}_{i-1}+p_{\min}^{-1}-1)\cdot \P[X_i^{(r+1)}=X_{i-1}^{(r+1)}+1,\,X^{(r)}_{i+1}=X^{(r)}_i+1].
\end{eqnarray*}
Since $\P[X_i^{(r+1)}=X_{i-1}^{(r+1)}] + \P[X_i^{(r+1)}=X_{i-1}^{(r+1)}+1]=1$, we have
\begin{eqnarray*}
\E\bigl[Z^{(r)}_i\vert Z^{(r)}_{i-1}\bigr] -Z^{(r)}_{i-1} & = & - \P[X_i^{(r+1)}=X_{i-1}^{(r+1)}+1,\,X^{(r)}_{i+1}=X^{(r)}_i]\\[1.2ex]
& + & (p_{\min}^{-1}-1)\cdot \P[X_i^{(r+1)}=X_{i-1}^{(r+1)}+1,\,X^{(r)}_{i+1}=X^{(r)}_i+1]\\[1.2ex]
& = & -\P[X_i^{(r+1)}=X_{i-1}^{(r+1)}+1] \times \\[1.2ex] 
& \times & \bigl(-1+p_{\min}^{-1} \cdot \P[X^{(r)}_{i+1}=X^{(r)}_i+1\,|\,X_i^{(r+1)}=X_{i-1}^{(r+1)}+1]\bigr)\ge 0,
\end{eqnarray*}
confirming the claim that $\{Z^{(r)}_i\}$ is a submartingale.

We are going to apply the Azuma-Hoeffding inequality, which says that, given that $\{Z^{(r)}_i\}$ is a submartingale, if $|Z^{(r)}_i-Z^{(r)}_{i-1}|\le \alpha_i$ for all $i$, then
$$
\P\left(Z^{(r)}_{N-1}-Z^{(r)}_1 \le -y\right) \le \exp\left(\frac{-y^2}{2\sum_{i=2}^{N-1} \alpha_i^2}\right).
$$
See, e.g., \cite{Alon_Spencer} for the (two-sided) Azuma-Hoeffding inequality for martingales. The one-sided inequality for submartingales is proved similarly, see e.g., \cite{Chung_Lu}.

We have $|Z^{(r)}_i-Z^{(r)}_{i-1}|\le p_{\min}^{-1}$, hence taking $y=\frac{\delta_{r+1}N}{3}$ yields
$$
\P\left(Z^{(r)}_{N-1}-Z^{(r)}_1 \le -\frac{\delta_{r+1}N}{3}\right) \le \exp\left(\frac{-N^2p_{\min}^2\delta_{r+1}^2}{18(N-2)}\right) \le \exp\left(\frac{-Np_{\min}^2\delta_{r+1}^2}{18}\right).
$$
Since $Z_1^{(r)}$ is bounded, we have for $N$ sufficiently large:
$$
\P\left(Z^{(r)}_{N-1} \le -\frac{\delta_{r+1}N}{2}\right) \le \P\left(Z^{(r)}_{N-1}-Z^{(r)}_1 \le -\frac{\delta_{r+1}N}{3}\right) \le \exp\left(\frac{-Np_{\min}^2\delta_{r+1}^2}{18}\right).
$$
Recall that $Z^{(r)}_{N-1}= \frac{X^{(r)}_N}{p_{\min}}- X^{(r+1)}_{N-1}$, and 
$$
\P\left(X^{(r+1)}_{N-1} < \delta_{r+1}N\right)  \le C_{r+1}'\cdot \exp(-c_{r+1}N),
$$
by the inductive assumption.
Therefore for $N$ sufficiently large,
\begin{eqnarray} \nonumber 
\P\left(X^{(r)}_{N-1} \le \frac{\delta_{r+1}N\cdot p_{\min}}{3}\right) & \le &
\P\left(X^{(r)}_N \le \frac{\delta_{r+1}N\cdot p_{\min}}{2}\right) \\ & \le & \exp\left(\frac{-Np_{\min}^2\delta_{r+1}^2}{18}\right) + C_{r+1}'\cdot \exp(-c_{r+1}N), \label{ineq3}
\end{eqnarray}
and (\ref{want8}) follows.
\end{proof}

\begin{proof}[Proof of Lemma~\ref{lem-proba}]
Consider the sequence of random variables
$$
U_i := \frac{Y_{i+1}}{p_1} - X_i,\ \ i=1,\ldots,N-1.
$$
We claim that $\{U_i\}$ is a martingale; in fact,
\be \label{martin}
\E\big[U_i\vert U_{i-1}\bigr] = U_{i-1}.
\ee
 This is proved analogously to the proof of the submartingale property for $\{Z^{(r)}_i\}$ above.
 If $\nb=\ell(w[1,i])$ is not a good vertex, then the edge $[\nb,\nb']$, with $\nb' = \ell(w[1,i+1])$, is not good either, and we have $X_{i} = X_{i-1},\ Y_{i+1}=Y_i,\ U_i = U_{i-1}$.
 If, other other hand, $\nb=\ell(w[1,i])$ is a good vertex, then the edge $[\nb,\nb']$, with $\nb' = \ell(w[1,i+1])$, is good with probability $p_1$, and this is independent from the past.
Thus, if $X_i= X_{i-1}+1$, then 
$$
U_i = \left\{\begin{array}{cc} U_{i-1}-1, & \mbox{with probability}\ 1-p_1,\\
U_{i-1}+ \frac{1}{p_1} - 1, & \mbox{with probability}\ p_1. \end{array} \right.
$$
This implies (\ref{martin}); the formal computation, similar to the above, is left to the reader.

Applying the Azuma-Hoeffding inequality to $\{U_i\}$, in view of $|U_i-U_{i-1}|\le p_1^{-1}$, after a computation similar to that above, we can estimate, for $N$ large enough, using
(\ref{want8}) for $r=0$:
$$
\P\left(Y_N \le \frac{\delta_0 N p_1}{3}\right) \le \exp\left(\frac{-\delta_0^2 Np_1^2}{18}\right) + C_0'\exp(-c_0 N).
$$
This implies the desired estimate (\ref{want7}).
\end{proof}


\section{Dimension of the exceptional set}

Fix $\gb\in \Ek'$. This means that $\gb \in \Ek_N$ for infinitely many $N$. Fix such an $N$, sufficiently large. We will show that this imposes constraints on $\gb$ allowing us to construct a good cover of $\Ek_N$. By definition of $\Ek_N$, there exists $t\in (B_1^{N-1},B_1^N]$ and a word $w\in \Ak^N$, such that the number of vertices ``on a good $(\gb,t,\rho)$-track'' on the path $\Gamma(w[d+1,N-d-1])$ does not exceed $N/k_1$. Fix such a $t$ and $w\in \Ak^N$, and for $\nb\in \Z^d_+$ let
$$
\gb^{\nb} t = K_{\nb} + \eps_{\nb},\ \ K_{\nb} \in \N,\ \eps_{\nb} \in [-1/2,1/2),
$$
that is, $K_{\nb}$ is the nearest integer to $\gb^{\nb}t$ and $\|\gb^{\nb} t\| = |\eps_{\nb}|$. One should keep in mind that $K_{\nb}$ and $\eps_{\nb}$ depend on $\gb$ and $t$, but we suppress this in notation to reduce ``clutter''.

The next lemma is analogous to the ones appearing in other variants of the Erd\H{o}s-Kahane argument; see e.g.,\ \cite[Lemma 6.3]{PSS00}.

\begin{lemma} \label{lem-choice} Let $\rho$ be given by (\ref{def-rho}) and 
\be \label{def-A}
A:=  2(1+B_2) (1+3B_2)+1.
\ee
Let $\nb\in \Z^d_+$, $\nb' = \nb +\eb_j,\ \nb'' = \nb + 2\eb_j$ for some $j\in \Ak$ (in particular, $\nb\to \nb'\to \nb''$), such that $K_{\nb''}\ge 1$. The following hold:

{\em (i)} Given $K_{\nb''}$ and $K_{\nb'}$, there are at most $A$ possibilities for $K_{\nb}$.

{\em (ii)} Given $K_{\nb''}$ and $K_{\nb'}$,  the number $K_{\nb}$ is uniquely determined, provided
\be \label{cond-small}
\max\{|\eps_{\nb}|, |\eps_{\nb'}|, |\eps_{\nb''}|\} < \rho,
\ee
that is, provided none of the $\nb,\nb',\nb''$ is $(\gb,t,\rho)$-good.
\end{lemma}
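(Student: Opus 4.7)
The plan is to use the two successive $j$-steps to derive a closed-form equation for $K_{\nb}$ in terms of $K_{\nb'},K_{\nb''}$ and the three remainders, and then bound how much that expression can vary as the remainders fluctuate in their allowed windows. Concretely, from $\gb^{\nb'}t=\gam_j\gb^{\nb}t$ and $\gb^{\nb''}t=\gam_j\gb^{\nb'}t$, eliminating $\gam_j$ yields the key identity
$$
K_{\nb}+\eps_{\nb}=\frac{(K_{\nb'}+\eps_{\nb'})^2}{K_{\nb''}+\eps_{\nb''}}.
$$
Moreover the bound $\gam_j^{-1}\le B_2$ gives $K_{\nb'}\le B_2 K_{\nb''}+(B_2+1)/2$, controlling $K_{\nb'}/K_{\nb''}$ by a constant depending only on $B_2$, and implying $K_{\nb},K_{\nb'}\ge 1$ (using $B_1>1$).

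For part (i), I fix $K_{\nb'},K_{\nb''}$ and observe that the map $(x,y)\mapsto (K_{\nb'}+x)^2/(K_{\nb''}+y)$ is monotone in each variable on $[-1/2,1/2)^2$, so its image is an interval of length
$$
L=\frac{(K_{\nb'}+1/2)^2}{K_{\nb''}-1/2}-\frac{(K_{\nb'}-1/2)^2}{K_{\nb''}+1/2}=\frac{K_{\nb'}^2+2K_{\nb'}K_{\nb''}+1/4}{K_{\nb''}^2-1/4}.
$$
Since $K_{\nb}$ is the nearest integer to a point of this interval, the number of admissible values lies in an interval of length $L+1$, hence is at most $L+2$. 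Combining the bound on $K_{\nb'}/K_{\nb''}$ with $K_{\nb''}\ge 1$, a short algebraic manipulation gives $L+2\le A$.

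For part (ii), the constraint (\ref{cond-small}) shrinks each window from $1/2$ to $\rho$, and the analogous monotonicity shows that the range of $(K_{\nb'}+x)^2/(K_{\nb''}+y)$ over $(x,y)\in[-\rho,\rho]^2$ is an interval of length
$$
L_\rho=\frac{2\rho K_{\nb'}^2+4\rho K_{\nb'}K_{\nb''}+2\rho^3}{K_{\nb''}^2-\rho^2}.
$$
Since $|\eps_{\nb}|<\rho$, the integer $K_{\nb}$ lies in an interval of length $L_\rho+2\rho$. The specific value of $\rho$ in (\ref{def-rho}) is calibrated so that $L_\rho+2\rho<1$ for all admissible $K_{\nb'},K_{\nb''}$; an interval of length strictly less than $1$ contains at most one integer, so $K_{\nb}$ is uniquely determined.

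The only real obstacle is arithmetic bookkeeping in the inequalities $L+2\le A$ and $L_\rho+2\rho<1$. The extremal case is $K_{\nb''}=1$, where the lower-order corrections in the a priori bound $K_{\nb'}\le B_2 K_{\nb''}+(B_2+1)/2$ contribute most and $K_{\nb''}^2-\rho^2$ is smallest relative to $K_{\nb''}^2$; the estimate $K_{\nb'}^2+2K_{\nb'}K_{\nb''}\le B_2(B_2+2)K_{\nb''}^2+(B_2+1)^2 K_{\nb''}+(B_2+1)^2/4$ together with $K_{\nb''}^2-\rho^2\ge 3K_{\nb''}^2/4$ (since $\rho<1/2$) translates directly into both desired bounds.
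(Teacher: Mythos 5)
Your proposal is correct and follows essentially the same route as the paper: both arguments rest on the fact that $K_{\nb}$ must lie within a controlled distance of $K_{\nb'}^2/K_{\nb''}$, combined with the a priori bound $K_{\nb'}/K_{\nb''}=O(B_2)$ coming from $\gb^{\nb''}t$ being bounded below. The only difference is bookkeeping — you compute the exact range of $(K_{\nb'}+\eps_{\nb'})^2/(K_{\nb''}+\eps_{\nb''})$ by monotonicity, whereas the paper bounds $|K_{\nb}-K_{\nb'}^2/K_{\nb''}|$ via a triangle inequality through $\gam_j^{-1}$ — and your constants are indeed compatible with the paper's choices of $A$ and $\rho$.
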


\begin{proof}
We have, by assumption,
$$
\gb^{\nb} t = K_{\nb} + \eps_{\nb},\ \ \gb^{\nb'} t = \gb^{\nb}\gam_j t = K_{\nb'} + \eps_{\nb'},\ \ 
  \gb^{\nb''} t = \gb^{\nb}\gam^2_j t = K_{\nb''} + \eps_{\nb''}\ge 1.
$$
The idea is that
$$
\frac{K_{\nb}}{K_{\nb'}} \approx \gam_j^{-1} \approx \frac{K_{\nb'}}{K_{\nb''}},
$$
hence $K_{\nb}$ must be not too far from $\frac{K_{\nb'}^2}{K_{\nb''}}$. First note that
\be \label{esqu1}
\frac{K_{\nb'}}{K_{\nb''}} = \frac{\gb^{\nb'}t - \eps_{\nb'}}{\gb^{\nb'}\gam_j t - \eps_{\nb''}} \le \frac{\gb^{\nb'}t +\half}{\gb^{\nb'}\gam_j t -\half} \le \frac{3\gb^{\nb'}t}{\gb^{\nb'}\gam_jt} = 3\gam_j^{-1} \le 3B_2,
\ee
where we used the bound $1 \le \gam^{\nb''} = \gam^{\nb'}\gam_j t\le \gam^{\nb'}t$. Next,
\begin{eqnarray*}
\left|\frac{K_{\nb}}{K_{\nb'}} -\frac{K_{\nb'}}{K_{\nb''}}\right| & \le & \left|\frac{K_{\nb}}{K_{\nb'}} -\gam_j^{-1}\right| + \left|\gam_j^{-1} - \frac{K_{\nb'}}{K_{\nb''}}\right| \\[1.2ex]
& = & \frac{|\eps_{\nb} - \gam_j^{-1} \eps_{\nb'}|}{K_{\nb'}} +  \frac{|\eps_{\nb'} - \gam_j^{-1} \eps_{\nb''}|}{K_{\nb''}} \\[1.2ex]
& \le & \frac{|\eps_{\nb}| + B_2|\eps_{\nb'}|}{K_{\nb'}} + \frac{|\eps_{\nb'}| + B_2|\eps_{\nb''}|}{K_{\nb''}}\,.
\end{eqnarray*}
Therefore,
\begin{eqnarray}
\nonumber \left|K_n - \frac{K_{\nb'}^2}{K_{\nb''}}\right| & \le & \bigl(|\eps_{\nb}| + B_2|\eps_{\nb'}|\bigr) +\frac{K_{\nb'}}{K_{\nb''}}\cdot \bigl(|\eps_{\nb'}| + B_2|\eps_{\nb''}|\bigr)\\[1.1ex]
& \le & \nonumber 2(1+B_2) (1+3B_2) \cdot \max\{|\eps_{\nb}|,|\eps_{\nb'}|,|\eps_{\nb''}|\},
\end{eqnarray} 
using (\ref{esqu1}) in the last step. Now both parts of the lemma follow easily. Indeed, $K_{\nb}$ is an integer. 

(i) Since $ \max\{|\eps_{\nb}|,|\eps_{\nb'}|,|\eps_{\nb''}|\}\le \half$, once $K_{\nb'}$ and $K_{\nb''}$ are given, there are at most $A$ 
possibilities for $K_{\nb}$, see (\ref{def-A}). 

(ii) The choice of $K_{\nb}$ will be unique, provided
$$
\max\{|\eps_{\nb}|,|\eps_{\nb'}|,|\eps_{\nb''}|\} < \rho= \frac{1}{4(1+B_2)(1+3B_2)}\,
$$
see (\ref{def-rho}).
\end{proof}

\begin{corollary} \label{cor-choice}
Suppose that $\nb\to \nb'$, and we are given $K_{\vb'}$ for all $\vb'$ such that $\nb'\leadsto \vb'$; assume that all of them satisfy $K_{\vb'}\ge 1$. Then

{\rm (i)} for any $\vb$, such that $\nb\leadsto \vb$, there at most $A$ possibilities for $K_{\vb}$;

{\rm (ii)} for any $\vb$, such that $\nb\leadsto \vb$, assuming that neither $\vb$, nor any of $\vb'$, with $\nb'\leadsto \vb'$, is $(\gb,t,\rho)$-good, $K_{\vb}$ is uniquely determined. 
\end{corollary}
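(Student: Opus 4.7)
The plan is to reduce each case of the corollary directly to Lemma~\ref{lem-choice}, splitting on whether the direction $j$ of the given edge $\nb\to\nb'$ (so $\nb'=\nb+\eb_j$) lies in the subset $\Gamma\subset\Ak$ that encodes the target $\vb$ via $\vb=\nb+\sum_{\kappa\in\Gamma}\eb_\kappa$.

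If $j\in\Gamma$, I would write $\Gamma=\{j\}\cup\Gamma'$ with $\Gamma'\subset\Ak\setminus\{j\}$, so that $\vb=\nb'+\sum_{\kappa\in\Gamma'}\eb_\kappa$ and hence $\nb'\leadsto\vb$. Then $K_\vb$ is already among the prescribed values, so both (i) and (ii) hold trivially with a single possibility for $K_\vb$.

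The substantive case is $j\notin\Gamma$, and here I would apply Lemma~\ref{lem-choice} with the triple $(\vb,\vb+\eb_j,\vb+2\eb_j)$ playing the roles of $(\nb,\nb',\nb'')$ in that lemma. The setup requires verifying the two identities $\vb+\eb_j=\nb'+\sum_{\kappa\in\Gamma}\eb_\kappa$ and $\vb+2\eb_j=\nb'+\sum_{\kappa\in\Gamma\cup\{j\}}\eb_\kappa$; the second one uses $j\notin\Gamma$ to ensure $\Gamma\cup\{j\}\subset\Ak$ has no repetition, so that the coordinate-wise difference is a $\{0,1\}$-vector. Both identities show $\nb'\leadsto\vb+\eb_j$ and $\nb'\leadsto\vb+2\eb_j$, so $K_{\vb+\eb_j}$ and $K_{\vb+2\eb_j}\ge 1$ are in the given data. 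Lemma~\ref{lem-choice}(i) then yields at most $A$ possibilities for $K_\vb$, proving part~(i). For part~(ii), the hypothesis rules out goodness of $\vb$ directly and of $\vb+\eb_j,\vb+2\eb_j$ via $\nb'\leadsto\vb+\eb_j,\vb+2\eb_j$, so condition (\ref{cond-small}) holds and Lemma~\ref{lem-choice}(ii) pins down $K_\vb$ uniquely.

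The main obstacle here is not analytic but purely combinatorial: verifying $\nb'\leadsto\vb+2\eb_j$ in the second case. This is precisely where the definition of $\leadsto$ (requiring the increment to be a sum of \emph{distinct} unit vectors, not a general non-negative integer combination) forces the case split on whether $j\in\Gamma$ and makes the case $j\in\Gamma$ trivial. Once this bookkeeping is cleared, both parts of the corollary fall out of Lemma~\ref{lem-choice} with no further estimates.
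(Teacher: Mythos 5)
Your proof is correct and follows essentially the same route as the paper: the same case split on whether $j\in\Gamma$, the same observation that $j\in\Gamma$ makes $K_{\vb}$ already prescribed, and the same identification of $\vb+\eb_j$ and $\vb+2\eb_j$ as $\leadsto$-descendants of $\nb'$ so that Lemma~\ref{lem-choice} applies directly. No gaps.
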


\begin{proof}
 Fix $\vb$ such that $\nb\leadsto \vb$. Then $\vb = \nb + \sum_{\kappa\in \Gam} \eb_\kappa$, for some $\Gam\subset \Ak$.
We have $\nb\to \nb'$; suppose $\nb'= \nb +\eb_j$ for some $j\in \Ak$. If $j\in \Gam$, then $\vb = \nb' + \sum_{\kappa\in \Gam \setminus \{j\}} \eb_\kappa$, so
$\nb'\leadsto \vb$ and $K_{\vb}$ is already known by assumption.

 If $j\not\in \Gam$, then $\vb':= \nb' + \sum_{\kappa\in \Gam} \eb_\kappa$ and $\vb'':= \nb' + \sum_{\kappa\in \Gam\cup \{j\}} \eb_\kappa$ satisfy
$$
\vb' = \vb+\eb_j,\ \vb'' = \vb'+\eb_j.
$$
Moreover,
$
\nb'\leadsto \vb',\ \nb'\leadsto \vb'',
$
so $K_{\vb'}$ and $K_{\vb''}$ are already given, and we are exactly in the situation of Lemma~\ref{lem-choice}. Applying the lemma yields the desired result.
\end{proof}

\begin{proof}[Proof of Proposition~\ref{prop-excep}]
Let $q\in \N$ be maximal, such that
\be \label{bound0}
\gb^{\ell(w[1,q])} t \ge B_2^d.
\ee
Note that
\be \label{bound1}
(N-d-1) \cdot \frac{\log B_1}{\log B_2} \le q \le N-d.
\ee
Let
$$
\nb_i:= \ell(w[1,i]),\ \ i=0,\ldots,q,
$$
be the $i$-th vertex on the path corresponding to the word $w$, so that $\nb_{i}\to \nb_{i+1}$.
We have chosen $q$ in such a way that 
$$
\gb^{\vb} t \ge 1,\ \ K_{\vb}\ge 1,\ \ \mbox{for all}\ \vb \ \mbox{such that}\ \nb_q \leadsto \vb,
$$

Recall that $w\in \Ak^N$ is fixed and the number of vertices ``on a good $(\gb,t,\rho)$-track'' on the path $\Gamma(w[d+1,N-d-1])$ does not exceed $N/k_1$.
We are going to estimate from above the number of possible configurations of integers $K_{\vb}$, where $\nb_i\leadsto \vb$ for some $i=q,\ q-1,\ldots,0$.
Note that for any $\nb\in \Z^d_+$ there are $2^d$ vertices $\vb$ such that $\nb\leadsto \vb$.

We start with the ``initial configuration'' of $K_{\vb}$ for $\vb$ such that $\nb_q\leadsto \vb$.
By the choice of $q$ we have
$$
B_2^d\le K_{\nb_q} \le B_2^{d+1},
$$
so $K_{\vb}\in [1,B_2^{d+1}]$ for all $\vb$ such that $\nb_q\leadsto \vb$.
It follows that the total number of possibilities for $K_{\vb}$ for all $\vb$ such that  $\nb_q\leadsto \vb$,  is at most
$$
L_1 := B_2^{(d+1)2^d}.
$$
Now we follow the path $\nb_q,\nb_{q-1},\ldots,\nb_{\b0}$ backwards, applying Corollary~\ref{cor-choice} at each step. Fix $i\le q$. Part (i) of the corollary says that for any $\vb$, with 
$\nb_{i-1}\leadsto \vb$, there are at most $A$ choices for $K_{\vb}$, once all the $K_{\vb'}$, with $\nb_i\leadsto \vb'$ are determined. Part (ii) of the corollary says that if none of $\nb_i$,
$\nb_{i-1}$ are on a ``good $(\gb,t,\rho)$-track'', those $K_{\vb}$ are determined uniquely. By assumption, there are no more that $N/k_1$ vertices of $w[d+1,N-d-1]$ that are ``on  a good $(\gb,t,\rho)$-track'', hence this will affect at most $2N/k_1$ transitions between $\nb_{N-d-1}$ and $\nb_{d+1}$. On each transition, we determine at most $2^d$ ``new'' values of $K_{\vb}$ (this is an ``overcount,'' but we do not try to be precise here).
If we fix the subset of $\{1,\ldots,q\}$ corresponding to the $\le N/k_1$ vertices ``on a good $(\gb,t,\rho)$-track'' on the path $\Gamma(w[d+1,N-d-1])$, we will obtain at most
$$
L_1\cdot A^{2^d[2N/k_1 + 2(d+1)]}=L_2\cdot A_1^{N/k_1}
$$
total configurations, where $ L_2 = L_1\cdot A^{2(d+1)},\ A_1 = A^{2^{d+1}}$. Taking into account all the possibilities for the subset in question and also possible values of $q\le N$ yields
that the total number of  configurations of $K_{\vb}$, for all $\vb$ under consideration, is at most
\begin{eqnarray*}
L_2\cdot N \cdot \sum_{i=1}^{\lfloor N/k_1\rfloor } {q\choose i}\cdot A_1^{N/k_1}
& < &  L_2\cdot N^2 {N\choose {\lfloor N/k_1\rfloor} }\cdot  A_1^{N/k_1}\\ &< & \exp\left(O_{B_2,d}(1)\cdot \frac{\log k_1}{k_1}\cdot N \right). 
\end{eqnarray*}

Next, note that the knowledge of all  $K_{\vb}$  associated with the path $w$ gives a good approximation of $\gam_j$. In fact, we have $\nb_{0} = \b0\leadsto \eb_j$ for $j\in \Ak$, so
$K_{\b0}$ and $K_{\eb_j}$  are among the ``known'' ones. Estimating as in Lemma~\ref{lem-choice}, we have
$$
\left|\gam_j^{-1} - \frac{K_{\b0}}{K_{\eb_j}}\right| \le \frac{1+B_2}{K_{\eb_j}},
$$
and $K_{\eb_j}\ge t\gam_j - \half\ge \frac{B_1^{N-1}}{B_2}-\half$. It follows that the knowledge of all  $K_{\vb}$ associated with the path $w$ gives a cover of the exceptional $\gb$ by balls of diameter
$\sim B_1^{-N}$. Taking into account that the number of words $w\in \Ak^N$ is equal to $d^N$, we obtain that the exceptional set $\Ek_N$ at scale $N$ may be covered by
$$
\exp\left(O_{B_2,d}(1)\cdot \frac{\log k_1}{k_1}\cdot N \right)\cdot d^N
$$
balls of diameter $\sim B_1^{-N}$. Recall that $B_1^s>d$ by (\ref{cond2}).
Thus we can choose $k_1\in \N$ such that 
\be \label{choose-k}
O_{B_2,d}(1)\cdot \frac{\log k_1}{k_1} < s\log B_1 - \log d.
\ee
Then
$$
\Hk^s(\Ek') \le \const\cdot \liminf_{N\to \infty} \exp\left[(s\log B_1 - \log d)N\right]\cdot d^N \cdot B_1^{-Ns} =\const<\infty,
$$
whence $\dim_H(\Ek') \le s$, as desired.
\end{proof}

\noindent {\bf Acknowledgement.} I am grateful to Ori Gurel-Gurevich for his help with the probabilistic argument, and to Tuomas Sahlsten for helpful discussions.

\bibliographystyle{plain}
\bibliography{nonunif}


\end{document}